\DeclareMathAlphabet{\mathpzc}{OT1}{pzc}{m}{it}
\setlist{nosep}
\renewcommand{\int}{\operatorname{int}}
\newcommand{\im}{\operatorname{Im}}
\renewcommand{\span}{\pi_{n}^{Sp}}
\newcommand{\ui}{I}
\newcommand{\bfx}{\mathbf{x}}
\newcommand{\scrw}{\mathscr{W}}
\newcommand{\scru}{\mathscr{U}}
\newcommand{\scrv}{\mathscr{V}}
\newcommand{\bbe}{\mathbb{E}}
\newcommand{\bbn}{\mathbb{N}}
\newcommand{\bbr}{\mathbb{R}}
\newcommand{\bbz}{\mathbb{Z}}
\newcommand{\scry}{\mathscr{Y}}
\newcommand{\bspaces}{\mathbf{Top_{\ast}}}
\newcommand{\grp}{\mathbf{Grp}}
\newcommand{\ab}{\mathbf{Ab}}
\newcommand{\St}{\text{St}}
\newcommand{\st}{\text{st}}
\newcommand{\sd}{\text{sd}}
\renewcommand{\int}{\text{int}}
\newtheorem{theorem}{Theorem}[section]
\newtheorem{lemma}[theorem]{Lemma}
\newtheorem{corollary}[theorem]{Corollary}
\theoremstyle{definition}\newtheorem{definition}[theorem]{Definition}
\newtheorem{example}[theorem]{Example}
\newtheorem{remark}[theorem]{Remark}
\begin{document}
\title{Elements of higher homotopy groups undetectable by polyhedral approximation}
\author{John K. Aceti, Jeremy Brazas}
\date{\today}

\maketitle

\begin{abstract}
When non-trivial local structures are present in a topological space $X$, a common approach to characterizing the isomorphism type of the $n$-th homotopy group $\pi_n(X,x_0)$ is to consider the image of $\pi_n(X,x_0)$ in the $n$-th \v{C}ech homotopy group $\check{\pi}_n(X,x_0)$ under the canonical homomorphism $\Psi_{n}:\pi_n(X,x_0)\to  \check{\pi}_n(X,x_0)$. The subgroup $\ker(\Psi_n)$ is the obstruction to this tactic as it consists of precisely those elements of $\pi_n(X,x_0)$, which cannot be detected by polyhedral approximations to $X$. In this paper, we use higher dimensional analogues of Spanier groups to characterize $\ker(\Psi_n)$. In particular, we prove that if $X$ is paracompact, Hausdorff, and $LC^{n-1}$, then $\ker(\Psi_n)$ is equal to the $n$-th Spanier group of $X$. We also use the perspective of higher Spanier groups to generalize a theorem of Kozlowski-Segal, which gives conditions ensuring that $\Psi_{n}$ is an isomorphism.
\end{abstract}

\section{Introduction}

When non-trivial local structures are present in a topological space $X$, a common approach to characterizing the isomorphism type of $\pi_n(X,x_0)$ is to consider the image of $\pi_n(X,x_0)$ in the $n$-th \v{C}ech (shape) homotopy group $\check{\pi}_n(X,x_0)$ under the canonical homomorphism $\Psi_{n}:\pi_n(X,x_0)\to  \check{\pi}_n(X,x_0)$. The \textit{$n$-th shape kernel} $\ker(\Psi_n)$ is the obstruction to this tactic as it consists of precisely those elements of $\pi_n(X,x_0)$, which cannot be detected by polyhedral approximations to $X$. This method has proved successful in many situations for both the fundamental group \cite{CConedim,EK98onedimfgs,FG05,FZ05} and higher homotopy groups \cite{BrazSequentialnconn,EK00higher,EKcones,EKRZSnake, Kawamurasuspensions}. In this paper, we study the map $\Psi_{n}$ and give a characterization the $n$-th shape kernel in terms of higher-dimensional analogues of Spanier groups.

The subgroups of fundamental groups, which are now commonly referred to as ``Spanier groups," first appeared in E.H. Spanier's unique approach to covering space theory \cite{Spanier66}. If $\scru$ is an open cover of a topological space $X$ and $x_0\in X$, then the \textit{Spanier group with respect to }$\scru$ is the subgroup $\pi^{Sp}_{1}(\scru,x_0)$ of $\pi_1(X,x_0)$ generated by path-conjugates $[\alpha][\gamma][\alpha]^{-1}$ where $\alpha$ is a path starting at $x_0$ and $\gamma$ is a loop based at $\alpha(1)$ with image in some element of $\scru$. These subgroups are particularly relevant to covering space theory since, when $X$ is locally path-connected, a subgroup $H\leq \pi_1(X,x_0)$ corresponds to a covering map $p:(Y,y_0)\to (X,x_0)$ if and only if $\pi^{Sp}_{1}(\scru,x_0)\leq H$ for some open cover $\scru$ \cite[2.5.12]{Spanier66}. The intersection $\pi^{Sp}_{1}(X,x_0)=\bigcap_{\scru}\pi^{Sp}_{1}(\scru,x_0)$ is called the \textit{Spanier group of }$(X,x_0)$ \cite{FRVZ11}. The inclusion $\pi^{Sp}_{1}(X,x_0)\subseteq \ker(\Psi_{1})$ always holds \cite[Prop. 4.8]{FZ07}. It is proved in \cite[Theorem 6.1]{BFThickSpan} that $\pi^{Sp}_{1}(X,x_0)=\ker(\Psi_{1})$ whenever $X$ is paracompact Hausdorff and locally path connected. The upshot of this equality is having a description of level-wise generators (for each open cover $\scru$) whereas there may be no readily available generating set for the kernel of a homomorphism induced by a canonical map from $X$ to the nerve $|N(\scru)|$. Indeed, $1$-dimensional Spanier groups have proved useful in persistence theory \cite{Virkpersistence}. Since much of applied topology is based on a geometric refinement of polyhedral approximation from shape theory, there seems potential for higher dimensional analogues to be useful as well.

Higher dimensional analogues of Spanier groups recently appeared in \cite{BKPnspanier} and are defined in a similar way: $\pi_{n}^{Sp}(\scru,x_0)$ is the subgroup of $\pi_n(X,x_0)$ consisting of homotopy classes of path-conjugates $\alpha\ast f$ where $\alpha$ is a path starting at $x_0$ and $f:S^n\to X$ is based at $\alpha(1)$ with image in some element of $\scru$. Then $\pi_{n}^{Sp}(X,x_0)$ is the intersection of these subgroups. In this paper, we prove a higher-dimensional analogue of the $1$-dimensional equality $\pi^{Sp}_{1}(X,x_0)=\ker(\Psi_{1})$ from \cite{BFThickSpan}.

A space $X$ is $LC^n$ if for every neighborhood $U$ of a point $x\in X$, there is a neighborhood $V$ of $x$ in $U$ such that every map $f:S^k\to V$, $0\leq k\leq n$ is null-homotopic in $U$. When a space is $LC^{n}$ ``small" maps on spheres of dimension $\leq n$ contract by null-homotopies of relatively the same size. Certainly, every locally $n$-connected space is $LC^n$. However, when $n\geq 1$, the converse is not true even for metrizable spaces. Our main result is the following.

\begin{theorem}\label{maintheorem}
Let $n\geq 1$ and $x_0\in X$. If $X$ is paracompact, Hausdorff, and $LC^{n-1}$, then $\pi_{n}^{Sp}(X,x_0)=\ker (\Psi_n)$.
\end{theorem}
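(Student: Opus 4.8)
The plan is to prove the two inclusions separately, realizing $\check{\pi}_n(X,x_0)$ as the inverse limit $\varprojlim_{\scru}\pi_n(|N(\scru)|,\ov{x_0})$ over the directed set of open covers of $X$ (a legitimate \v{C}ech expansion since $X$ is paracompact Hausdorff), so that $\ker(\Psi_n)=\bigcap_{\scru}\ker(p_{\scru\ast})$, where $p_{\scru}\colon X\to|N(\scru)|$ is a canonical map subordinate to $\scru$ and $p_{\scru\ast}$ is the induced homomorphism. For the inclusion $\pi_{n}^{Sp}(X,x_0)\subseteq\ker(\Psi_n)$, which requires no local connectivity, it is enough to treat covers $\scrv$ that arise as the family of positivity sets of a (locally finite) partition of unity, since these are cofinal by paracompactness; for such $\scrv$ the image under $p_{\scrv}$ of any $\scrv$-small based $n$-sphere lies in the contractible open star of a vertex of $|N(\scrv)|$, so $p_{\scrv\ast}$ kills every path-conjugate of a $\scrv$-small sphere, i.e.\ $p_{\scrv\ast}(\pi_{n}^{Sp}(\scrv,x_0))=0$. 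Intersecting, $\pi_{n}^{Sp}(X,x_0)=\bigcap_{\scrv}\pi_{n}^{Sp}(\scrv,x_0)\subseteq\bigcap_{\scrv}\ker(p_{\scrv\ast})=\ker(\Psi_n)$; this is the higher analogue of \cite[Prop.~4.8]{FZ07}.

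For the reverse inclusion it suffices to show that every open cover $\scru$ admits a refinement $\scrw$ with $\ker(p_{\scrw\ast})\subseteq\pi_{n}^{Sp}(\scru,x_0)$: granting this, $\ker(\Psi_n)\subseteq\ker(p_{\scrw\ast})\subseteq\pi_{n}^{Sp}(\scru,x_0)$ for every $\scru$, whence $\ker(\Psi_n)\subseteq\bigcap_{\scru}\pi_{n}^{Sp}(\scru,x_0)=\pi_{n}^{Sp}(X,x_0)$. Fix $\scru$. Using paracompactness, the Hausdorff property, and—crucially—$LC^{n-1}$, I would build a finite tower of open (star-)refinements of $\scru$ terminating in a cover $\scrw$, together with a \emph{partial realization} $\psi\colon|N(\scrw)|^{(n)}\to X$ of the nerve: send each vertex $v_W$ to a point of $W$ (the base vertex to $x_0$), and extend $\psi$ over the $k$-skeleton, $k=1,\dots,n$, by contracting inside suitably small open sets the $\scrw$-small $(k-1)$-spheres that appear on boundaries of $k$-simplices—this is precisely where $LC^{k-1}$ is invoked. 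The tower is to be arranged so that $\psi$ maps the $n$-skeleton $\ov{\sigma}^{(n)}$ of every closed simplex $\sigma$ of $N(\scrw)$ of dimension $\leq n+1$ into a single member of $\scru$, which may moreover be taken to contain the sets of $\scrw$ associated with $\sigma$. Executing this controlled construction—nesting the refinements so that $LC^{n-1}$ applies uniformly—is the main obstacle.

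Now take $[f]\in\ker(p_{\scrw\ast})$. By simplicial approximation replace $p_{\scrw}\circ f$ by a simplicial map $g\colon S^n\to|N(\scrw)|^{(n)}$ homotopic to it rel basepoint. By the control on $\psi$, the maps $f$ and $\psi\circ g$ are $\scru$-close—for each $x$, $f(x)$ and $\psi(g(x))$ lie in a common member of $\scru$—so, $X$ being $LC^{n-1}$, a closeness lemma of the type in \cite{BKPnspanier} (proved by subdividing $S^n\times I$ and collecting the resulting small path-conjugates) gives $[f]\equiv[\psi\circ g]$ modulo $\pi_{n}^{Sp}(\scru,x_0)$. It remains to show $[\psi\circ g]\in\pi_{n}^{Sp}(\scru,x_0)$. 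Since $g\simeq p_{\scrw}\circ f$ is null-homotopic in $|N(\scrw)|$, fill it with a simplicial map $\Phi\colon D^{n+1}\to|N(\scrw)|^{(n+1)}$ with $\Phi|_{\partial D^{n+1}}=g$, for a suitable triangulation $T$ of $D^{n+1}$; being simplicial, $\Phi$ carries $T^{(n)}$ into $|N(\scrw)|^{(n)}$, so $\psi\circ\Phi$ is defined on $T^{(n)}$. Because $D^{n+1}$ is obtained from $T^{(n)}$ by attaching $(n+1)$-cells and $\pi_n(D^{n+1})=0$, the class $[\partial D^{n+1}]\in\pi_n(T^{(n)},\wt{x}_0)$ is a product of $\pi_1(T^{(n)},\wt{x}_0)$-conjugates of the attaching-map classes of the $(n+1)$-simplices of $T$. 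Pushing forward by $\psi\circ\Phi$, and using that the attaching map of each $(n+1)$-simplex lands, via $\Phi$, in the $n$-skeleton of a single closed simplex of $N(\scrw)$—hence under $\psi$ into a single member of $\scru$—the element $[\psi\circ g]=(\psi\circ\Phi)_\ast[\partial D^{n+1}]$ becomes a product of path-conjugates, by paths from $x_0$, of $\scru$-small $n$-spheres; that is, $[\psi\circ g]\in\pi_{n}^{Sp}(\scru,x_0)$. Hence $[f]\in\pi_{n}^{Sp}(\scru,x_0)$, and as $\scru$ was arbitrary, $[f]\in\pi_{n}^{Sp}(X,x_0)$. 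Besides the construction of $\psi$, the steps needing care are this closeness lemma in dimension $n$ and the basepoint bookkeeping in the $\pi_1$-conjugation, where the conjugating paths in $T^{(n)}$ must be arranged to be based at $\wt{x}_0$ so that their $(\psi\circ\Phi)$-images are genuine paths from $x_0$.
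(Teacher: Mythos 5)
Your overall strategy is right and the first inclusion is essentially the paper's argument (cofinality of partition-of-unity covers, images of $\scrv$-small spheres landing in contractible open stars), but for the hard inclusion $\ker(\Psi_n)\subseteq \pi_n^{Sp}(X,x_0)$ you take a genuinely different route. You build a partial realization $\psi$ of the $n$-skeleton of the nerve $|N(\scrw)|$ and then must compare $f$ with $\psi\circ g$ via a ``closeness lemma,'' in the spirit of Kozlowski--Segal. The paper instead never leaves the domain of $f$: given $[f]\in\ker(p_{\scrv\#})$, it extends the null-homotopy $p_{\scrv}\circ f$ to $h:\Delta_{n+1}\to|N(\scrv)|$, takes a simplicial approximation $h'$ on $\sd^m\Delta_{n+1}$, and then recursively extends $f$ \emph{itself} (kept fixed on $L=\sd^m\partial\Delta_{n+1}$) over the $k$-skeleta of $\sd^m\Delta_{n+1}$, using a tower of $\ast$- and $LC^{k}$-refinements, so that each $(n+1)$-simplex boundary lands in a member of $\scru$; the consistency of the vertex assignment on $L$ is exactly the canonical-map condition $f(a)\in p_{\scrv}^{-1}(\st(V_a,|N(\scrv)|))\subseteq V_a$. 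Since $f$ is literally the restriction of the extension to $\partial\Delta_{n+1}$, the Hurewicz-based decomposition of $\pi_n$ of the $n$-skeleton into path-conjugates of boundaries of $(n+1)$-simplices (the paper's Lemma \ref{emptyroomlemma}, which is your final filling step) finishes the proof with no comparison step at all. What your approach buys is a reusable partial realization of the nerve (which the paper does construct separately, in Lemma \ref{ontolemma}, to prove surjectivity of $\Psi_n$ under extra hypotheses); what it costs is that both of the steps you flag as obstacles are real and substantial: the controlled construction of $\psi$ and the closeness lemma each require their own tower of refinements and skeleton-wise extension over a triangulated $(n{+}1)$-dimensional complex (for the closeness lemma, $S^n\times I$ rel $S^n\times\partial I$), i.e.\ roughly twice the machinery the paper deploys once. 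Your basepoint bookkeeping concern is handled the same way in both arguments: conjugate each attaching class by a path from the basepoint of the $n$-skeleton, and note that for $n\geq 2$ the $n$-skeleton is simply connected so the $\pi_1$-conjugation is harmless. So: correct in outline, provable with the tools you name, but the paper's anchoring of the extension to $f$ on the boundary is a simplification worth adopting, as it eliminates the closeness lemma entirely.
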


This result confirms that higher Spanier groups, like their $1$-dimensional counterparts, often identify precisely those elements of $\pi_n(X,x_0)$ which can be detected by polyhedral approximations to $X$. More precisely, under the hypotheses of Theorem \ref{maintheorem}, $g\in \span(X,x_0)$ if and only if $f_{\#}(g)=0$ for every map $f:X\to K$ to a polyhedron $K$. A first countable path-connected space is $LC^0$ if and only if it is locally path connected. Hence, in dimension $n=1$, Theorem \ref{maintheorem} only expands \cite[Theorem 6.1]{BFThickSpan} to some non-first countable spaces.

Regarding the proof of Theorem \ref{maintheorem}, the inclusion $\pi_{n}^{Sp}(X,x_0)\subseteq \ker (\Psi_n)$ was first proved for $n=1$ in \cite[Prop. 4.8]{FZ07} and for $n\geq 2$ in \cite[Theorem 4.14]{BKPnspanier}. We include this proof for the sake of completion (Lemma \ref{firstinclusioncor}). The proof of the inclusion $\ker (\Psi_n)\subseteq \pi_{n}^{Sp}(X,x_0)$ appears in Section \ref{section5mainresult} and is more intricate, requiring a carefully chosen sequence of open cover refinements using the $LC^{n-1}$ property. These refinements allow one to recursively extend maps on simplicial complexes skeleton-wise. These extension methods, established in Section \ref{section4extensions}, are similar to methods found in \cite{KSextension,KSvietoris}. 

We also put these extension methods to work in Section \ref{section6isomorphism} where we identify conditions that imply $\Psi_{n}$ is an isomorphism. In \cite{KSvietoris}, Kozlowski-Segal prove that if $X$ is paracompact Hausdorff and $LC^n$, then $\Psi_{n}$ is an isomorphism. In \cite{FZ07}, Fischer and Zastrow generalize this result in dimension $n=1$ by replacing ``$LC^1$" with ``locally path connected and semilocally simply connected." Similar, to the approach of Fischer-Zastrow, our use of Spanier groups shows that the existence of \textit{small} null-homotopies of small maps $S^n\to X$ (specifically in dimension $n$) is not necessary to prove that $\Psi_n$ is injective. We say a space $X$ is \textit{semilocally $\pi_n$-trivial} if for every $x\in X$ there exists an open neighborhood $U$ of $x$ such that every map $S^n\to U$ is null-homotopic in $X$. This definition is independent of lower dimensions but certainly $LC^{n}$ $\Rightarrow$ ($LC^{n-1}$ and semilocally $\pi_n$-trivial). Our second result proves Kozlowski-Segal's theorem under a weaker hypothesis and is stated as follows.

\begin{theorem}\label{mainresult2}
Let $n\geq 1$ and $x_0\in X$. If $X$ is paracompact, Hausdorff, $LC^{n-1}$, and semilocally $\pi_n$-trivial, then $\Psi_{n}:\pi_n(X,x_0)\to \check{\pi}_n(X,x_0)$ is an isomorphism.
\end{theorem}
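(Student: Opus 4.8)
The plan is to deduce Theorem~\ref{mainresult2} from the main result, Theorem~\ref{maintheorem}, by showing that the two hypotheses together force $\pi_n^{Sp}(X,x_0)$ to be trivial, and then arguing separately that $\Psi_n$ is surjective. For injectivity: since $X$ is paracompact, Hausdorff, and $LC^{n-1}$, Theorem~\ref{maintheorem} gives $\ker(\Psi_n)=\pi_n^{Sp}(X,x_0)=\bigcap_{\scru}\pi_n^{Sp}(\scru,x_0)$, so it suffices to exhibit a single open cover $\scru$ with $\pi_n^{Sp}(\scru,x_0)=0$. The semilocal $\pi_n$-triviality hypothesis is exactly what produces such a cover: for each $x\in X$ choose an open $U_x$ such that every map $S^n\to U_x$ is null-homotopic in $X$, and let $\scru=\{U_x\}_{x\in X}$. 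Then every generator $\alpha\ast f$ of $\pi_n^{Sp}(\scru,x_0)$, with $f:S^n\to X$ having image in some $U_x$, is null-homotopic in $X$ (the basepoint-change by the path $\alpha$ preserves triviality in $\pi_n(X,x_0)$), so the generator is already trivial in $\pi_n(X,x_0)$; hence $\pi_n^{Sp}(\scru,x_0)=0$ and therefore $\ker(\Psi_n)=0$.

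For surjectivity of $\Psi_n$, I would invoke the extension machinery developed in Section~\ref{section4extensions} (in the spirit of Kozlowski--Segal \cite{KSextension,KSvietoris}). The $n$-th \v{C}ech homotopy group $\check{\pi}_n(X,x_0)=\varprojlim \pi_n(|N(\scru)|)$ is computed over the inverse system of nerves of open covers, with canonical maps $p_{\scru}:X\to |N(\scru)|$. Given a compatible thread $(g_{\scru})\in\varprojlim \pi_n(|N(\scru)|)$, one wants to realize it by a genuine map $S^n\to X$. Using the $LC^{n-1}$ property one builds, for a suitably fine cover $\scru$ and a fine enough triangulation of $S^n$, a partial lift of a representative simplicial map $S^n\to|N(\scru)|$ over the $(n-1)$-skeleton into $X$; the $n$-th obstruction to extending over the top cells lands in $\pi_n$ of the relevant small open sets, and one uses a further refinement to absorb it. The compatibility of the thread ensures the choices can be made coherently, producing $h:S^n\to X$ with $\Psi_n([h])$ equal to the given thread (up to a correction that vanishes in the limit). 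This is essentially the surjectivity half of the Kozlowski--Segal argument, which only uses $LC^{n-1}$, not $LC^n$.

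The main obstacle is the surjectivity step: unlike injectivity, it genuinely requires the delicate skeleton-by-skeleton extension with a carefully nested sequence of cover refinements, and one must check that the $LC^{n-1}$ hypothesis alone (without control on $\pi_n$ of small sets) still suffices to carry the lift across the top-dimensional cells. The trick is that one is lifting into $X$ a map whose target is an $n$-dimensional polyhedron, so the obstruction cocycle can be pushed into successively finer covers where, in the inverse limit, it becomes null; semilocal $\pi_n$-triviality is \emph{not} needed here because surjectivity only concerns hitting elements of $\check{\pi}_n(X,x_0)$, which already live at the polyhedral level. I would organize the surjectivity proof as: (i) reduce to realizing a single thread; (ii) fix a cofinal sequence of covers $\scru_1\succ\scru_2\succ\cdots$ chosen via $LC^{n-1}$ so that $n$-spheres in members of $\scru_{k+1}$ contract in members of $\scru_k$; (iii) inductively construct compatible partial maps on skeleta of a fixed triangulation of $S^n$, refining covers as needed; (iv) assemble and verify $\Psi_n([h])$ equals the thread. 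Combining (injectivity) and (surjectivity) yields that $\Psi_n$ is an isomorphism, completing the proof of Theorem~\ref{mainresult2}.
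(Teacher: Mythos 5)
Your injectivity argument is correct and is exactly the paper's: Theorem \ref{maintheorem} identifies $\ker(\Psi_n)$ with $\span(X,x_0)$, and semilocal $\pi_n$-triviality supplies a single cover $\scru$ with $\span(\scru,x_0)=0$. The surjectivity half, however, has a genuine gap. The extension machinery of Section \ref{section4extensions} (packaged in the paper as Lemma \ref{ontolemma}) yields only \emph{level-wise} surjectivity: for each $\scru\in\Lambda$ and each coordinate $[f_{\scru}]$ of the thread, some $[g_{\scru}]\in\pi_n(X,x_0)$ satisfies $p_{\scru\#}([g_{\scru}])=[f_{\scru}]$. This does not by itself produce one element mapping onto the whole thread: each $g_{\scru}$ is determined only up to $\ker(p_{\scru\#})$, and your appeal to ``the compatibility of the thread ensures the choices can be made coherently'' is precisely the step that needs an argument (it is the usual levelwise-versus-$\varprojlim$ issue). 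The paper closes this gap by using semilocal $\pi_n$-triviality a \emph{second} time: since $\span(\scru,x_0)=0$ for some $\scru$, Lemma \ref{secondinclusionlemma} gives $\scrv$ with $\ker(p_{\scrv\#})\subseteq\span(\scru,x_0)=0$, so $p_{\scrv\#}$ is injective, which forces $[g_{\scrw}]=[g_{\scrv}]$ for all $\scrw\succeq\scrv$ and hence a single preimage of the thread. Your explicit claim that semilocal $\pi_n$-triviality is \emph{not} needed for surjectivity is therefore unsupported; in the paper's proof it is exactly what makes the level-wise lifts cohere.

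A smaller inaccuracy in your sketch: when lifting a simplicial map defined on a triangulation of $S^n$ into $X$, the obstruction to extending over the top-dimensional ($n$-dimensional) cells lies in $\pi_{n-1}$ of the relevant small open sets, since the attaching spheres are $(n-1)$-spheres --- not in $\pi_n$ as you state. This is precisely why $LC^{n-1}$ suffices for the level-wise lift, and why no inverse-limit ``absorption'' of an obstruction cocycle is needed: Lemma \ref{ontolemma} uses only a single finite nested chain of refinements $\scrw_{n}\preceq_{\ast}\scrv_{n}\preceq_{\ast}^{n-1}\cdots\preceq_{\ast}^{0}\scrw_{0}\preceq_{\ast}\scrv$ fixed in advance.
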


The hypotheses in Theorem \ref{mainresult2} are the homotopical versions of the hypotheses used in \cite{Mardesic} to ensure that the canonical homomorphism $\varphi_{\ast}:H_n(X)\to \check{H}_n(X)$ is an isomorphism, see also \cite{EKSingularSurj} regarding the surjectivity of $\varphi_{\ast}$. Examples show that $\Psi_{n}$ can fail to be an isomorphism if $X$ is semilocally $\pi_n$-trivial but not $LC^{n-1}$ (Example \ref{examplenotuvnminusone}) or if $X$ is $LC^{n-1}$ but not semilocally $\pi_n$-trivial (Example \ref{examplenotsemilocallypintrivial}).

The authors are grateful to the referee for many suggestions, which substantially improved the exposition of this paper.

\section{Preliminaries and Notation}\label{section2prelims}

Throughout this paper, $X$ is assumed to be a path-connected topological
space with basepoint $x_0$. The unit interval is denoted $\ui$ and $S^n$ is the unit $n$-sphere with basepoint $d_0=(1,0,\dots ,0)$. The $n$-th homotopy group of $(X,x_0)$ is denoted $\pi_n(X,x_0)$. If $f:(X,x_0)\to (Y,y_0)$ is a based map, then $f_{\#}:\pi_n(X,x_0)\to \pi_n(Y,y_0)$ is the induced homomorphism.

A \textit{path} in a space $X$ is a map $\alpha:\ui\rightarrow
X $ from the unit interval. The \textit{reverse} of $\alpha$ is the path
given by $\alpha^{-}(t)=\alpha(1-t)$ and the concatenation of two paths $\alpha,\beta$ with $\alpha(1)=\beta(0)$ is denoted $\alpha\cdot \beta$. Similarly, if $f,g:S^n\to X$ are maps based at $x\in X$, then $f\cdot g$ denotes the usual $n$-loop concatenation and $f^{-}$ denotes the reverse map. We may write $\prod_{i=1}^{m}f_i$ to denote an $m$-fold concatenation $f_1\cdot f_2\cdot \,\cdots\, \cdot f_m$.

\subsection{Simplicial complexes }

We make heavy use of standard notation and theory of abstract and geometric simplicial complexes, which can be found in texts such as \cite{MS82} and \cite{Mu84}. We briefly recall relevant notation.

For an abstract (geometric) simplicial complex $K$ and integer $r\geq 0$, $K_r$ denotes the $r$-skeleton of $K$. If $K$ is abstract, $|K|$ denotes the geometric
realization of $K$ with the weak topology. If $K$ is geometric, then $\sd^{m}K$ denotes the $m$-th
barycentric subdivision of $K$ and if $v$ is a vertex of $K$, then $\st(v,K)$ denotes the open star of the vertex $v$. When $L\subseteq K$ is a subcomplex, $\sd^{m}L$ is a subcomplex of $\sd^{m}K$.  If $\sigma=\{v_0,v_1,\dots,v_r\}$ is a $r$-simplex of $K$, then $
[v_0,v_1,...,v_r]$ denotes the $r$-simplex of $|K|$ with the indicated orientation.

We frequently make use of the standard $n$-simplex $\Delta_{n}$ in $\bbr^{n}$ spanned by the origin ${\bf o}$ and standard unit vectors. Since the boundary $\partial\Delta_{n}=(\Delta_{n})_{n-1}$ is homeomorphic to $S^{n-1}$, we fix a based homeomorphism $\partial \Delta_{n}\cong S^{n-1}$ that allows us to represent elements of $\pi_n(X,x_0)$ by maps $(\partial \Delta_{n+1},{\bf o})\to (X,x_0)$.

\subsection{The \v{C}ech expansion and shape homotopy groups}

We now recall the construction of the first shape homotopy group $\check{\pi}_{1}(X,x_0)$ via the \v{C}ech expansion. For more details, see \cite{MS82}.

Let $\mathcal{O}(X)$ be the set of open covers of $X$ directed by refinement; we write $\scrv\succeq \scru$ when $\scrv$ refines $\scru$. Similarly, let $\mathcal{O}(X,x_0)$ be the set of open covers with a
distinguished element containing $x_0$, i.e. the set of pairs $(\mathscr{U},U_0)$ where $\mathscr{U}\in \mathcal{O}(X)$, $U_0\in \mathscr{U}$, and $x_0\in U_0$. We say $(\mathscr{V},V_0)$ refines $(\mathscr{U},U_0)$ if $\scrv\succeq \scru$ and $V_0\subseteq U_0$.

The nerve of a cover $(\mathscr{U},U_0)\in\mathcal{O}(X,x_0)$ is the
abstract simplicial complex $N(\mathscr{U})$ whose vertex set is $N(
\mathscr{U})_0=\mathscr{U}$ and vertices $A_0,...,A_n\in \mathscr{U}$ span
an n-simplex if $\bigcap_{i=0}^{n}A_i\neq \emptyset$. The vertex $U_0$ is
taken to be the basepoint of the geometric realization $|N(\mathscr{U})|$.
Whenever $(\mathscr{V},V_0)$ refines $(\mathscr{U},U_0)$, we can construct a
simplicial map $p_{\mathscr{U}\mathscr{V}}:N(\mathscr{V})\to N(\mathscr{U})$
, called a \textit{projection}, given by sending a vertex $V\in N(\mathscr{V}
)$ to a vertex $U\in \mathscr{U}$ such that $V\subseteq U$. In particular, we make a convention that $p_{\mathscr{U}\mathscr{V}}(V_0)=U_0$. Any such assignment of vertices extends linearly to a simplicial map. Moreover, the induced map $|p_{\mathscr{U}\mathscr{V}}|:|N(
\mathscr{V})|\to|N(\mathscr{U})|$ is unique up to based homotopy. Thus the
homomorphism $p_{\mathscr{U}\mathscr{V}\#}:\pi_{1}(|N(\mathscr{V})|,V_0)\to \pi_{1}(|N(\mathscr{U})|,U_0)$ induced on fundamental groups is (up to coherent isomorphism) independent of the choice of simplicial map.

Recall that an open cover $\scru$ of $X$ is normal if it admits a partition of unity subordinated to $\scru$. Let $\Lambda$ be the subset of $\mathcal{O}(X,x_0)$ (also directed by refinement) consisting of pairs $(\mathscr{U},U_0)$ where $\scru$ is a normal open cover of $X$ and such that there is a partition of unity $\{\phi_{U}\}_{U\in \mathscr{U}}$ subordinated to $\scru$ with $\phi_{U_0}(x_0)=1$. It is well-known that every open cover of a paracompact Hausdorff space $X$ is normal. Moreover, if $(\scru,U_0)\in \mathcal{O}(X,x_0)$, it is easy to refine $(\scru,U_0)$ to a cover $(\scrv,V_0)$ such that $V_0$ is the only element of $\scrv$ containing $x_0$ and therefore $(\scrv,V_0)\in \Lambda$. Thus, for paracompact Hausdorff $X$, $\Lambda$ is cofinal in $\mathcal{O}(X,x_0)$.

The \textit{$n$-th shape homotopy group} is the inverse limit 
\begin{equation*}
\check{\pi}_{n}(X,x_0)=\varprojlim\left(\pi_{n}(|N(\mathscr{U})|,U_0),p_{
\mathscr{U}\mathscr{V}\#},\Lambda\right).
\end{equation*}
This group is also referred to as the $n$-th \v{C}ech homotopy group.

Given an open cover $(\mathscr{U},U_0)\in \mathcal{O}(X,x_0)$, a map $p_{\mathscr{U}}:X\to |N(\mathscr{U})|$ is a \textit{(based) canonical map} if $p_{\mathscr{U}}^{-1}(\st(U,N(\mathscr{U})))\subseteq U$ for each $U\in \mathscr{U}$ and $p_{\scru}(x_0)=U_0$. Such a canonical map is guaranteed to exist if $(\mathscr{U},U_0)\in
\Lambda $: find a locally finite partition of unity $\{\phi_{U}\}_{U\in \mathscr{U}}$ subordinated to $\scru$ such that $\phi_{U_0}(x_0)=1$. When $U\in \mathscr{U}$ and $x\in U$, determine $p_{\mathscr{U}}(x)$ by requiring its barycentric coordinate belonging to the vertex $U$ of $|N(\mathscr{U})|$ to be $\phi_{U}(x)$. According to this construction, the requirement $\phi_{U_0}(x_0)=1$ gives $p_{\scru}(x_0)=U_0$.

A canonical map $p_{\scru}$ is unique up to based homotopy and whenever $(\mathscr{V},V_0)$ refines $(\mathscr{U},U_0)$; the compositions $p_{\mathscr{U}\mathscr{V}}\circ p_{\mathscr{V}}$ and $p_{\mathscr{U}}$ are homotopic as based maps. Hence, for $n\geq 1$, the homomorphisms $p_{\mathscr{U}\#}:\pi_{n}(X,x_0)\to \pi_{n}(|N(\mathscr{U})|,U_0)$ satisfy $p_{\mathscr{U}\mathscr{V}\#}\circ p_{\mathscr{V}\#}=p_{\mathscr{U}\#}$. These homomorphisms
induce the following canonical homomorphism to the limit, which is natural in the continuous maps of based spaces:
\begin{equation*}
\Psi_{n}:\pi_{n}(X,x_0)\to \check{\pi}_{n}(X,x_0)\text{ given by }\Psi_{n}([f])=\left([p_{\mathscr{U}}\circ f]\right).
\end{equation*}

The subgroup $\ker(\Psi_{n})$, which we refer to as the \textit{$n$-th shape kernel} is, in a rough sense, an algebraic measure of the $n$-dimensional homotopical information lost when approximating $X$ by polyhedra. Since $(p_{\scru})$ forms an $\mathbf{HPol}$-expansion of $X$ \cite[App. 1 \S 3.2 Theorem 8]{MS82}, we have $[f]\in \pi_n(X,x_0)\backslash \ker(\Psi_n)$ if and only if there exist a polyhedron $K$ and a map $p:(X,x_0)\to (K,k_0)$ such that $p_{\#}([f])\neq 0$ in $\pi_n(K,k_0)$. Of utmost importance is the situation when $\ker(\Psi_{n})=0$. In this case, $\pi_n(X,x_0)$ can be understood as a subgroup of $\check{\pi}_{n}(X,x_0)$, that is, the $n$-th shape group retains all the data in the $n$-th homotopy group of $X$. A space for which $\ker(\Psi_{n})=0$ is said to be $\pi_n$-\textit{shape injective}.

\section{Higher Spanier Groups}\label{section3spaniergroups}

To define higher Spanier groups as in \cite{BKPnspanier}, we briefly recall the action of the fundamental groupoid on the higher homotopy groups of a space. Fix a retraction $R:S^n\times \ui\to S^n\times \{0\}\cup \{d_0\}\times \ui$. Given a map $f:(S^n,d_0)\to (X,y_0)$ and a path $\alpha:\ui\to X$ with $\alpha(0)=x_0$ and $\alpha(1)=y_0$, define $F:S^n\times \{0\}\cup \{d_0\}\times \ui\to X$ so that $g(x,0)=f(x)$ and $f(d_0,t)=\alpha(1-t)$. The \textit{path-conjugate of $f$ by} $\alpha$ is the map $\alpha\ast f:(S^n,d_0)\to (X,x_0)$ given by $\alpha\ast f(x)=F( R(x,1))$.

Path-conjugation defines the basepoint-change isomorphism $\varphi_{\alpha}:\pi_n(X,y_0)\to \pi_n(X,x_0)$, $\varphi_{\alpha}([f])=[\alpha\ast f]$. In particular, $[\alpha\ast f][\alpha\ast g]=[\alpha\ast (f\cdot g)]$. Additionally, if $[\alpha]=[\beta]$, which we write to mean that the paths $\alpha$ and $\beta$ are homotopic relative to their endpoints, then $[\alpha\ast f]=[\beta\ast f]$. Note that when $n=1$, $f:S^1\to X$ is a loop and $\alpha\ast f\simeq \alpha\cdot f\cdot \alpha^{-}$.

\begin{definition}
Let $n\geq 1$ and $\alpha:(\ui,0)\to (X,x_0)$ be a path and $U$ be an open neighborhood of $\alpha(1)$ in $X$. Define \[[\alpha]\ast \pi_n(U)=\{[\alpha\ast f]\in \pi_n(X,x_0)\mid f(S^n)\subseteq U,f(d_0)=\alpha(1)\}.\]
\end{definition}

Since $[\alpha\ast f][\alpha\ast g]=[\alpha\ast (f\cdot g)]$, the set $[\alpha]\ast \pi_n(U)$ is a subgroup of $\pi_n(X,x_0)$.

\begin{definition}
Let $n\geq 1$, $\scru$ be an open cover of $X$, and $x_0\in X$. The \textit{$n$-th Spanier group of $(X,x_0)$ with respect to $\scru$} is the subgroup $\span(\scru,x_0)$ of $\pi_n(X,x_0)$ generated by the subgroups $[\alpha]\ast \pi_n(U)$ for all pairs $(\alpha,U)$ with $\alpha(1)\in U$ and $U\in\scru$. In short:
\[\span(\scru,x_0)=\langle [\alpha]\ast \pi_n(U)\mid U\in \scru,\alpha(1)\in U\rangle\]
The \textit{$n$-th Spanier group of $(X,x_0)$} is the intersection 
\[\pi_{n}^{Sp}(X,x_0)=\bigcap_{\scru\in O(X)}\pi_{n}^{Sp}(\scru,x_0).\]We may refer to subgroups of the form $\span(\scru,x_0)$ as \textit{relative} Spanier groups and to $\pi_{n}^{Sp}(X,x_0)$ as the \textit{absolute} Spanier group.
\end{definition}

\begin{remark}
We note that our definition of $n$-th Spanier group is the ``unbased" definition from \cite{BKPnspanier}; see also \cite{FRVZ11} for more on ``based" Spanier groups, which is defined using covers of $X$ by \textit{pointed} open sets. The two notions agree for locally path connected spaces. When $n=1$, Spanier groups (absolute and relative to a cover) are normal subgroups of $\pi_1(X,x_0)$. In the case $n=1$, Spanier groups have been studied heavily due to their relationship to covering space theory \cite{Spanier66}. 
\end{remark}

\begin{remark}[Functorality]
Let $\bspaces$ denote the category of based topological spaces and based continuous functions and $\grp$ and $\ab$ denote the usual categories of groups and abelian groups respectively. If $f:(X,x_0)\to (Y,y_0)$ is a map and $\scrv$ is an open cover of $Y$, then $\scru=\{f^{-1}(V)\mid V\in\scrv\}$ is an open cover of $X$ such that $f_{\#}(\pi_n(\scru,x_0))\subseteq \pi_n(\scrv,y_0)$. It follows that $f_{\#}(\span(X,x_0))\subseteq \span(Y,y_0)$. Thus $(f_{\#})|_{\span(X,x_0)}:\span(X,x_0)\to \span(Y,y_0)$ is well-defined showing that $\pi_{1}^{Sp}:\bspaces\to \grp$ and $\span:\bspaces\to \ab$, $n\geq 2$, are functors \cite[Theorem 4.2]{BKPnspanier}. Moreover, if $g:(Y,y_0)\to (X,x_0)$ is a based homotopy inverse of $f$, then $(f_{\#})|_{\span(X,x_0)}$ and $(g_{\#})|_{\span(Y,y_0)}$ are inverse isomorphisms. Hence, these functors descend to functors $\mathbf{hTop_{\ast}}\to\grp$ and $\mathbf{hTop_{\ast}}\to\ab$ where $\mathbf{hTop_{\ast}}$ is the category of based spaces and basepoint-relative homotopy classes of based maps.
\end{remark}

\begin{remark}[Basepoint invariance]\label{basepointinvarianceremark}
Suppose $x_0,x_1\in X$ and $\beta:\ui\to X$ is a path from $x_1$ to $x_0$, and $\varphi_{\beta}:\pi_n(X,x_0)\to \pi_n(X,x_1)$, $\varphi_{\beta}([g])=[\beta\ast g]$ is the basepoint-change isomorphism. If $[\alpha\ast f]$ is a generator of $\span(\scru,x_0)$, then $\varphi_{\beta}([\alpha\ast f])=[(\beta\cdot\alpha)\ast f]$ is a generator of $\span(\scru,x_1)$. It follows that $\varphi_{\beta}(\span(\scru,x_0))=\span(\scru,x_1)$. Moreover, in the absolute case, we have $\varphi_{\beta}(\span(X,x_0))=\span(X,x_1)$. In particular, changing the basepoint of $X$ does not change the isomorphism type of the $n$-th Spanier group, particularly its triviality.
\end{remark}

In terms of our choice of generators, a generic element of $\span(\scru,x_0)$ is a product $\prod_{i=1}^{m}[\alpha_i\ast f_i]$ where each map $f_i:S^n\to X$ has an image in some open set $U_i\in\scru$ (see Figure \ref{figgenerator}). The next lemma identifies how such products might actually appear in practice and motivates the proof of our key technical lemma, Lemma \ref{secondinclusionlemma}. Recall that $(\sd^m\Delta_{n+1})_{n}$ is the union of the boundaries of the $(n+1)$-simplices in the $m$-th barycentric subdivision $\sd^m\Delta_{n+1}$.

\begin{figure}[H]
\centering \includegraphics[height=2.3in]{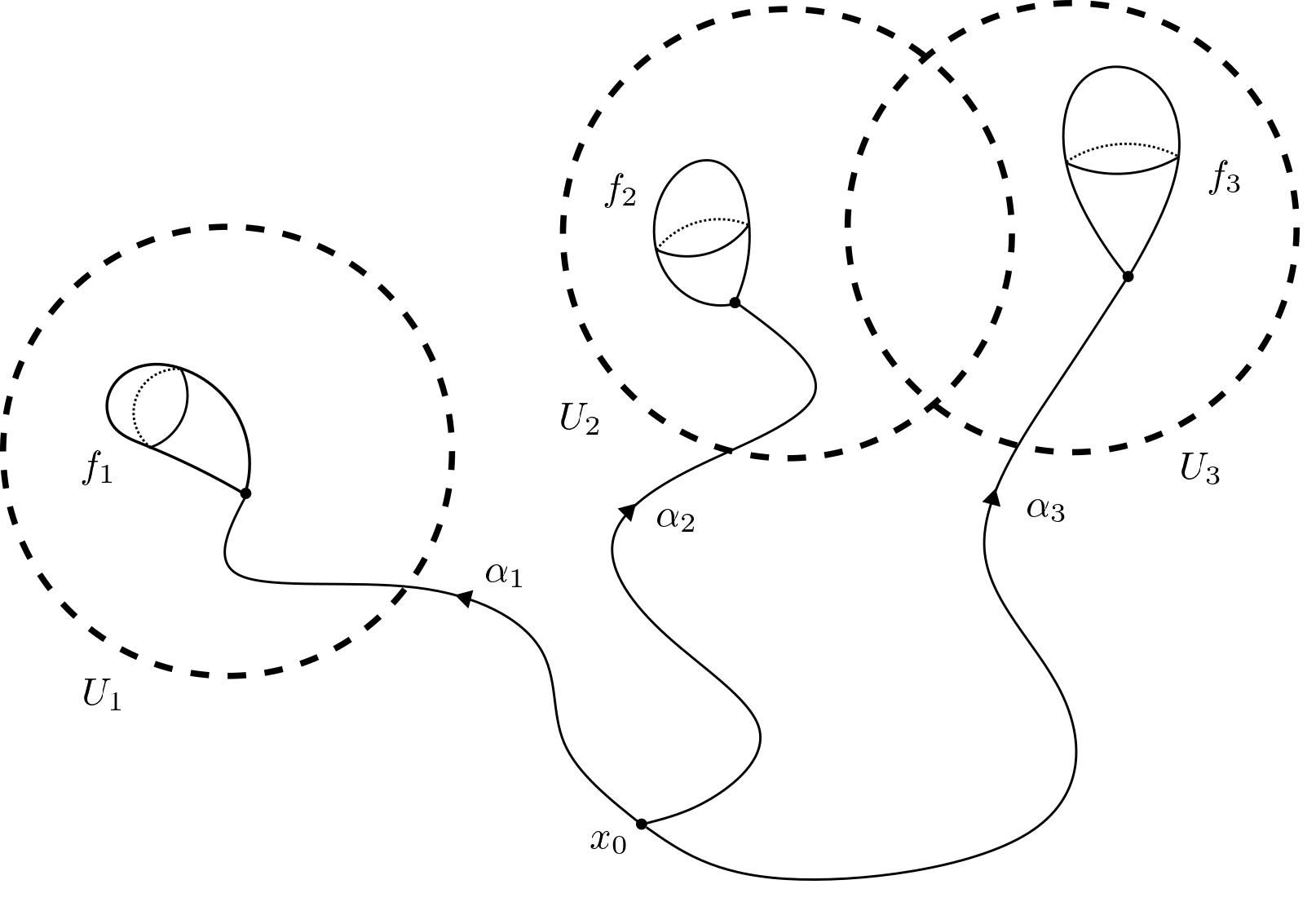}
\caption{\label{figgenerator}An element of $\pi_{2}^{Sp}(\scru,x_0)$, which is a product of three path-conjugate generators $[\alpha_i\ast f_i]$. }
\end{figure}

\begin{lemma}\label{emptyroomlemma}
For $m,n\in\bbn$, let $\scru$ be an open cover of $X$ and $f:((\sd^m\Delta_{n+1})_{n},{\bf o})\to (X,x_0)$ be a map such that for every $(n+1)$-simplex $\sigma$ of $\sd^m\Delta_{n+1}$, we have $f(\partial \sigma)\subseteq U$ for some $U\in\scru$. Then $f_{\#}(\pi_n((\sd^m\Delta_{n+1})_{n},{\bf o}))\subseteq  \pi_{n}^{Sp}(\scru,x_0)$.
\end{lemma}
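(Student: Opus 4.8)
The plan is to show that the image of any element of $\pi_n((\sd^m\Delta_{n+1})_n,{\bf o})$ under $f_\#$ can be written as a product of path-conjugate generators of $\span(\scru,x_0)$, by decomposing the $n$-skeleton of the subdivided simplex into pieces, one for each top-dimensional simplex. The point is that $(\sd^m\Delta_{n+1})_n$ is the union of the boundaries $\partial\sigma\cong S^n$ over all $(n+1)$-simplices $\sigma$ of $\sd^m\Delta_{n+1}$, and these boundaries are glued to one another along lower-dimensional faces; the hypothesis says each $f|_{\partial\sigma}$ lands in a single element of $\scru$, so each $[\partial\sigma]$, suitably path-conjugated back to $x_0$, contributes a generator of $\span(\scru,x_0)$.

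First I would set up the combinatorics: the complex $K=\sd^m\Delta_{n+1}$ is an $(n+1)$-dimensional simplicial complex whose underlying space is a ball, so $|K_n|=(\sd^m\Delta_{n+1})_n$ is connected and $\pi_n(|K_n|,{\bf o})$ is generated by the classes of the form $[\gamma]\ast[\partial\sigma]$ where $\sigma$ runs over the $(n+1)$-simplices of $K$ and $\gamma$ is a path in $|K_n|$ from ${\bf o}$ to a chosen basepoint $v_\sigma\in\partial\sigma$. This is a standard fact: $|K_n|$ is obtained from a wedge-like configuration of $n$-spheres (the $\partial\sigma$) by identifications that do not affect $\pi_n$ beyond the free/normal-closure generation, since the $(n-1)$-skeleton carries no $\pi_n$ and the attaching data for the $n$-cells organize $\pi_n(|K_n|)$ as generated by the boundary classes of the $(n+1)$-cells-to-be. (For $n=1$ this is the statement that $\pi_1$ of the $1$-skeleton of a subdivided triangle is free on the boundary loops of the $2$-simplices, up to conjugation and a global relation; for $n\ge 2$ it is the analogous statement that $\pi_n$ of $|K_n|$ is generated as a $\pi_1$-module by the $[\partial\sigma]$.) I would phrase this as: every $g\in\pi_n(|K_n|,{\bf o})$ is a product $\prod_i [\gamma_i]\ast [\partial\sigma_i]^{\pm 1}$ for paths $\gamma_i$ in $|K_n|$.

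Next I would apply $f_\#$. For each factor, $f_\#([\gamma_i]\ast[\partial\sigma_i]^{\pm1}) = [f\circ\gamma_i]\ast [f|_{\partial\sigma_i}]^{\pm1}$, using naturality of path-conjugation (if $\alpha=f\circ\gamma$ then $\alpha\ast(f\circ h)\simeq f\circ(\gamma\ast h)$, which follows from the definition of path-conjugation via the retraction $R$). Now $\alpha_i:=f\circ\gamma_i$ is a path in $X$ from $x_0$ to $f(v_{\sigma_i})$, and by hypothesis $f(\partial\sigma_i)\subseteq U_i$ for some $U_i\in\scru$, with $f(v_{\sigma_i})\in U_i$. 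Hence $[f\circ\gamma_i]\ast[f|_{\partial\sigma_i}] \in [\alpha_i]\ast\pi_n(U_i)$, which is one of the generating subgroups of $\span(\scru,x_0)$; the inverse of such an element lies in the same subgroup since $[\alpha]\ast\pi_n(U)$ is a subgroup. Therefore $f_\#(g)$ is a product of elements of subgroups $[\alpha_i]\ast\pi_n(U_i)$ with $U_i\in\scru$, so $f_\#(g)\in\span(\scru,x_0)$, as required.

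The main obstacle is the generation statement for $\pi_n(|K_n|,{\bf o})$ — pinning down precisely that $\pi_n$ of the $n$-skeleton of a subdivided $(n+1)$-simplex is generated (as a set, under multiplication and $\pi_1$-conjugation) by the path-conjugates of the boundary spheres $[\partial\sigma]$ of the $(n+1)$-simplices. I would handle this by a cellular/CW argument: give $|K_n|$ its CW structure, note $|K_{n-1}|$ is $(n-1)$-dimensional hence the inclusion $|K_{n-1}|\hookrightarrow|K_n|$ induces the trivial map on $\pi_n$ after we kill it, and use that attaching the $n$-cells of $K$ to $|K_{n-1}|$ makes $\pi_n(|K_n|)$ a quotient of a free $\mathbb{Z}[\pi_1]$-module on the $n$-cells; then observe each $\partial\sigma$ (for $\sigma$ an $(n+1)$-simplex) is a sum of the corresponding $n$-cells with signs, and that the $n$-cells not appearing in any such combination — there are none, since every $n$-simplex of $K$ is a face of some $(n+1)$-simplex of $K$ as $K$ triangulates a manifold — so the $[\partial\sigma]$ together with the $\pi_1$-action already generate. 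For $n=1$ one instead uses the spanning-tree description of $\pi_1$ of a graph directly. I expect this combinatorial-topological lemma to be the only nontrivial input; everything after applying $f_\#$ is a short naturality-plus-bookkeeping argument.
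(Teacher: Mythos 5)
Your overall strategy is the same as the paper's: reduce everything to the claim that $\pi_n(K_n,{\bf o})$ (where $K=\sd^m\Delta_{n+1}$) is generated by path-conjugates of the boundary spheres $[\partial\sigma]$ of the $(n+1)$-simplices, and then push these generators forward by $f_\#$ using naturality of path-conjugation. That second half of your argument is correct and matches the paper. The gap is in your justification of the generation statement, which you correctly identify as the only nontrivial input but then sketch in a way that does not hold up. First, the claim that attaching the $n$-cells to $|K_{n-1}|$ makes $\pi_n(|K_n|)$ ``a quotient of a free $\bbz[\pi_1]$-module on the $n$-cells'' is false as stated: the exact sequence of the pair $(K_n,K_{n-1})$ shows $\pi_n(K_n)$ is an extension of a subgroup of $\pi_n(K_n,K_{n-1})$ by the image of $\pi_n(K_{n-1})$, and the latter group is not zero in general (an $(n-1)$-dimensional complex can carry nontrivial $\pi_n$, e.g.\ $\pi_3$ of a wedge of $2$-spheres). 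In the present situation that image does vanish, but this requires an argument (e.g.\ Hilton's theorem plus the vanishing of $\pi_{n-1}(K_n)$), and you give none.

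Second, and more seriously, your criterion for why the $[\partial\sigma]$ generate --- ``every $n$-simplex of $K$ is a face of some $(n+1)$-simplex'' --- is not the relevant condition and would equally ``prove'' a false statement: a triangulated torus with $n=1$ satisfies it, yet the boundary loops of its triangles generate only the commutator-type kernel of $\pi_1(K_1)\to\pi_1(K)\cong\bbz^2$, not all of $\pi_1(K_1)$. What actually makes the argument work is that $\Delta_{n+1}$ is contractible, so $H_n(K)=H_{n+1}(K)=0$, whence every $n$-cycle of $K_n$ is a boundary of a unique $(n+1)$-chain and $H_n(K_n)=Z_n(K)=B_n(K)\cong C_{n+1}(K)$ is free abelian on the classes $[\partial\sigma]$. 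The paper's route is cleaner than the cellular one you propose: for $n\geq 2$ the complex $K_n$ is simply connected (so no $\pi_1$-module bookkeeping is needed at all), the homology computation above shows it is $(n-1)$-connected, and the Hurewicz theorem then identifies $\pi_n(K_n,{\bf o})$ with $H_n(K_n)$, freely generated by the path-conjugates $[p_\sigma\ast g_\sigma]$; the case $n=1$ is quoted from prior work, as you also suggest. To repair your write-up you should replace the face-incidence observation with the contractibility/Hurewicz argument (or otherwise invoke $H_n(K)=0$), and either drop the $\bbz[\pi_1]$-module framing or explicitly handle the image of $\pi_n(K_{n-1})$.
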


\begin{proof}
The case $n=1$ is proved in \cite{BFThickSpan}. Suppose $n\geq 2$ and set $K=\sd^m\Delta_{n+1}$. The set $\scrw=\{f^{-1}(U)\mid U\in\scru\}$ is an open cover of $K_{n}=(\sd^m\Delta_{n+1})_n$ such that $f_{\#}(\pi_{n}^{Sp}(\scrw,{\bf o}))\subseteq \pi_{n}^{Sp}(\scru,x_0)$ and for every $(n+1)$-simplex $\sigma$ in $K$, we have $\partial \sigma\subseteq f^{-1}(U)$ for some $U\in\scru$. Thus it suffices to prove $\pi_n(K_{n},{\bf o})\subset \span(\scrw,{\bf o})$. Let $S$ be the set of $(n+1)$-simplices of $K$. Since $n\geq 2$, $K_{n}$ is simply connected. Standard simplicial homology arguments give that the reduced singular homology groups of $K_{n}$ are trivial in dimension $<n$ and $H_{n}(K_{n})$ is a finitely generated free abelian group. A set of free generators for $H_n(K_{n})$ can be chosen by fixing the homology class of a simplicial map $g_{\sigma}:\partial \Delta_{n+1}\to K_n$ that sends $\partial \Delta_{n+1}$ homeomorphically onto the boundary of an $(n+1)$-simplex $\sigma \in S$. Thus $K_n$ is $(n-1)$-connected and the Hurewicz homomorphism $h:\pi_k(K_n,{\bf o})\to H_k(K_n)$ is an isomorphism for all $1\leq k\leq n$. In particular, let $p_{\sigma}:I\to K_n$ be any path from ${\bf o}$ to $g_{\sigma}({\bf o})$. Then $\pi_n(K_n,{\bf o})$ is freely generated by the path-conjugates $[p_{\sigma}\ast g_{\sigma}]$, $\sigma\in S$. By assumption, for every $\sigma\in S$, $[p_{\sigma}\ast g_{\sigma}]$ is a generator of $\pi_{n}^{Sp}(\scrw,{\bf o})$. Since $\pi_{n}^{Sp}(\scrw,{\bf o})$ contains all the generators of $\pi_n(K_n,{\bf o})$, the inclusion $\pi_n(K_{n},{\bf o})\subset\span(\scrw,{\bf o})$ follows.
\end{proof}

To characterize the triviality of relative Spanier groups, we establish the following terminology.

\begin{definition}\label{slscdef}
Let $n\geq 0$ and $x\in X$. We say the space $X$ is 
\begin{enumerate}
\item \textit{semilocally $\pi_n$-trivial at} $x$ if there exists an open neighborhood $U$ of $x$ in $X$ such that every map $S^n\to U$ is null-homotopic in $X$.
\item \textit{semilocally $n$-connected at} $x$ if there exists an open neighborhood $U$ of $x$ in $X$ such that every map $S^k\to X$, $0\leq k\leq n$ is null-homotopic in $X$.
\end{enumerate}
We say $X$ is \textit{semilocally $\pi_n$-trivial} (resp. semilocally $n$-connected) if it has this property at all of its points.
\end{definition}

It is straightforward to see that $X$ is semilocally $n$-connected at $x$ if and only if $X$ is semilocally $\pi_k$-trivial at $x$ for all $0\leq k\leq n$.

\begin{remark}
A space $X$ is semilocally $\pi_n$-trivial if and only if $X$ admits an open cover $\scru$ such that $\span(\scru,x_0)$ is trivial \cite[Theorem 3.7]{BKPnspanier}. Moreover, $X$ is semilocally $n$-connected if and only if $X$ admits an open cover $\scru$ such that $\pi_{k}^{Sp}(\scru,x_0)$ is trivial for all $1\leq k\leq n$. Note that local path connectivity is independent of the properties given in Definition \ref{slscdef}.
\end{remark}

Attempting a proof of Theorem \ref{maintheorem}, one should not expect the groups $\span(\scru,x_0)$ and $\ker(p_{\scru\#})$ to agree ``on the nose." Indeed, the following example shows that we should not expect the equality $\span(\scru,x_0)=\ker(p_{\scru\#})$ to hold even in the ``nicest" local circumstances. 

\begin{example}
Let $X=S^2\vee S^2$ and $W$ be a contractible neighborhood of $d_0$ in $S^2$. Set $U_1=S^2\vee W$ and $U_2=W\vee S^2$ and consider the open cover $\scru=\{U_1,U_2\}$ of $X$. Then $\pi_{3}^{Sp}(\scru,x_0)\cong \bbz^2$ is freely generated by the homotopy classes of the two inclusions $i_1,i_2:S^2\to X$. However, $\pi_{3}(X)\cong \bbz^{3}$ is freely generated by $[i_1]$, $[i_2]$, and the Whitehead product $\llbracket i_1,i_2\rrbracket$. However $|N(\scru)|$ is a $1$-simplex and is therefore contractible. Thus $\ker (p_{\scru\#})$ is equal to $\pi_{3}(X)$ and contains $\llbracket i_1,i_2\rrbracket$. Even though the spaces $X,U_1,U_2$ are locally contractible and the elements of $\scru$ are $1$-connected, $\span(\scru,x_0)$ is a proper subgroup of $\ker(p_{\scru\#})$. One can view this failure as the result of two facts: (1) The sets $U_i$ are not $2$-connected and (2) the definition of Spanier group does not allow one to generate homotopy classes by taking Whitehead products of maps $S^2\to U_i$ in the neighboring elements of $\scru$.
\end{example}

First, we show the inclusion $\span(X,x_0)\subseteq \ker (\Psi_{n})$ holds in full generality. Recall the intersections $\span(X,x_0)=\bigcap_{\scru\in O(X)}\span(\scru,x_0)$ and $\ker(\Psi_{n})=\bigcap_{(\scru,U_0)\in\Lambda}\ker(p_{\scru\#})$ are formally indexed by different sets.

\begin{lemma}\label{inclusionlemma1}
For every open cover $\scru$ of $X$ and canonical map $p_{\scru}:X\to|N(\scru)|$, there exists a refinement $\scrv \succeq\scru$ such that $\span(\scrv,x_0)\subseteq \ker(p_{\scru\#})$ in $\pi_n(X,x_0)$.
\end{lemma}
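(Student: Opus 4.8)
The plan is to exhibit an explicit refinement $\scrv$ of $\scru$ such that every path-conjugate generator $[\alpha \ast f]$ of $\span(\scrv, x_0)$ is killed by the canonical map $p_{\scru}$. The point is that a canonical map $p_{\scru} : X \to |N(\scru)|$ has the property that $p_{\scru}^{-1}(\st(U, N(\scru))) \subseteq U$, so $p_{\scru}$ sends each element of $\scru$ into a contractible (in fact star-shaped) open subset of $|N(\scru)|$ only when that element is small relative to the covering. To make this work we want a refinement $\scrv$ with the property that each $V \in \scrv$ is carried by $p_{\scru}$ into an open star $\st(U, N(\scru))$ for some vertex $U$. The standard way to arrange this: set $\scrv = \{p_{\scru}^{-1}(\st(w, N(\scru))) \mid w \text{ a vertex of } N(\scru)\}$, i.e.\ pull back the open-star cover of the nerve. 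Then $\scrv$ refines $\scru$ (because $p_{\scru}^{-1}(\st(U,N(\scru))) \subseteq U$), and by construction each $V \in \scrv$ maps under $p_{\scru}$ into a single open star, which is contractible.

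**Next I would check the generators.** Let $[\alpha \ast f]$ be a generator of $\span(\scrv, x_0)$, so $f : S^n \to X$ has image in some $V \in \scrv$ with $f(d_0) = \alpha(1) \in V$. Then $p_{\scru} \circ f : S^n \to |N(\scru)|$ has image in $\st(w, N(\scru))$, a contractible set, so $p_{\scru} \circ f$ is null-homotopic; moreover the null-homotopy can be taken to keep the basepoint inside that star. Passing to path-conjugates, $p_{\scru\#}([\alpha \ast f]) = [(\, p_{\scru} \circ \alpha\,) \ast (\, p_{\scru} \circ f\,)]$, and since $p_{\scru} \circ f$ is null-homotopic in an open star whose closure does not necessarily contain the basepoint $U_0$, one must conjugate the null-homotopy along the path $p_{\scru} \circ \alpha$ to conclude $[(\, p_{\scru} \circ \alpha\,) \ast (\, p_{\scru} \circ f\,)] = 0$ in $\pi_n(|N(\scru)|, U_0)$. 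This uses only that path-conjugation by a fixed path is a homomorphism and sends the trivial element to the trivial element. Since these path-conjugates generate $\span(\scrv, x_0)$, we get $\span(\scrv, x_0) \subseteq \ker(p_{\scru\#})$.

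**One technical point to be careful about** is basepoints: the definition of canonical map fixes $p_{\scru}(x_0) = U_0$, but a generator $[\alpha \ast f]$ involves $f$ based at $\alpha(1)$, which need not be $x_0$ and need not lie in $U_0$. This is exactly why the path-conjugation step above is needed — one cannot directly say "$p_{\scru}\circ f$ is null based at $U_0$." The cleanest formulation is: $p_{\scru} \circ (\alpha \ast f) \simeq (p_{\scru}\circ\alpha) \ast (p_{\scru}\circ f)$ as based maps (this follows from the definition of path-conjugation being natural with respect to composition with a continuous map), and then $p_{\scru}\circ f$ being null-homotopic rel basepoint implies its $\alpha$-conjugate is null-homotopic rel $x_0 \mapsto U_0$.

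**The main obstacle** I anticipate is verifying cleanly that the open-star cover $\{\st(w, N(\scru))\}$ pulls back to a genuine refinement of $\scru$ — which is immediate from the defining property of a canonical map — combined with the bookkeeping that the star $\st(w, N(\scru))$ is contractible (true, it deformation retracts to the vertex $w$). Neither is deep, but both need to be invoked explicitly. I would also remark that this lemma, once combined with Lemma~\ref{emptyroomlemma} and a cofinality argument over $\Lambda$, yields the inclusion $\span(X, x_0) \subseteq \ker(\Psi_n)$ claimed just before the statement, so the proof should end by noting the refinement $\scrv$ can be taken inside $\Lambda$ when $X$ is paracompact Hausdorff (or simply leaving the global consequence to the corollary that follows).
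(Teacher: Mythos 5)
Your proposal is correct and follows essentially the same route as the paper: pull back the open-star cover of the nerve via $p_{\scru}$ to obtain $\scrv$, use the canonical-map condition $p_{\scru}^{-1}(\st(U,N(\scru)))\subseteq U$ to see $\scrv\succeq\scru$, and kill each generator $[\alpha\ast f]$ because $p_{\scru}\circ f$ lands in a contractible star. The paper leaves the basepoint/path-conjugation bookkeeping implicit, but your more careful treatment of it is consistent with the intended argument.
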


\begin{proof}
Let $\scru\in O(X)$. The stars $\st(U,|N(\scru)|)$, $U\in\scru$ form an open cover of $|N(\scru)|$ by contractible sets and therefore $\scrv=\{p_{\scru}^{-1}(\st(U,|N(\scru)|))\mid U\in\scru\}$ is an open cover of $X$. Since $p_{\scru}$ is a canonical map, we have $p_{\scru}^{-1}(\st(U,|N(\scru)|))\subseteq U$ for all $U\in\scru$. Thus $\scrv$ is a refinement of $\scru$. A generator of $\span(\scrv,x_0)$ is of the form $[\alpha\ast f]$ for a map $f:S^n\to p_{\scru}^{-1}(\st(U,|N(\scru)|))$. However, $p_{\scru}\circ f$ has image in the contractible open set $\st(U,|N(\scru)|)$ and is therefore null-homotopic. Thus $p_{\scru\#}([\alpha\ast f])=0$. We conclude that $p_{\scru\#}(\span(\scrv,x_0))=0$.
\end{proof}

\begin{corollary}\label{firstinclusioncor}\cite[Theorem 4.14]{BKPnspanier}
Let $n\geq 1$. For any based space $(X,x_0)$, we have $\span(X,x_0)\subseteq \ker(\Psi_{n})$.
\end{corollary}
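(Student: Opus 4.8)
The plan is to derive the corollary directly from Lemma \ref{inclusionlemma1} together with the two descriptions of the subgroups involved as intersections. Recall from Section \ref{section3spaniergroups} that $\span(X,x_0)=\bigcap_{\scru\in O(X)}\span(\scru,x_0)$, and from Section \ref{section2prelims} that $\ker(\Psi_n)=\bigcap_{(\scru,U_0)\in\Lambda}\ker(p_{\scru\#})$, where for each $(\scru,U_0)\in\Lambda$ the subgroup $\ker(p_{\scru\#})$ does not depend on the choice of canonical map $p_{\scru}$ since any two canonical maps are based homotopic. All of the geometric content has already been absorbed into Lemma \ref{inclusionlemma1}, so what remains is a short diagram-chasing argument with these indexing sets.

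First I would fix an arbitrary $(\scru,U_0)\in\Lambda$ and a canonical map $p_{\scru}:X\to |N(\scru)|$. By Lemma \ref{inclusionlemma1}, there is an open cover $\scrv\succeq\scru$ of $X$ with $\span(\scrv,x_0)\subseteq\ker(p_{\scru\#})$. Since $\span(X,x_0)$ is the intersection of the relative Spanier groups $\span(\scrw,x_0)$ over all open covers $\scrw$ of $X$, in particular $\span(X,x_0)\subseteq\span(\scrv,x_0)$, and hence $\span(X,x_0)\subseteq\ker(p_{\scru\#})$. As $(\scru,U_0)\in\Lambda$ was arbitrary, intersecting over $\Lambda$ gives $\span(X,x_0)\subseteq\bigcap_{(\scru,U_0)\in\Lambda}\ker(p_{\scru\#})=\ker(\Psi_n)$, which is the claim.

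I do not anticipate a genuine obstacle: the one point that deserves a sentence of care is that the refinement $\scrv$ furnished by Lemma \ref{inclusionlemma1} need not lie in $\Lambda$, and need not even be a normal cover. This is harmless, however, because $\span(X,x_0)$ is defined by intersecting over \emph{all} open covers of $X$, so the inclusion $\span(X,x_0)\subseteq\span(\scrv,x_0)$ is available regardless of whether $\scrv\in\Lambda$. (It is precisely this mismatch in indexing sets — alluded to in the remark preceding Lemma \ref{inclusionlemma1} — that makes the intermediary Lemma \ref{inclusionlemma1}, rather than a bare inclusion $\span(\scru,x_0)\subseteq\ker(p_{\scru\#})$, the right statement to quote.)
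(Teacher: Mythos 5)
Your proposal is correct and follows essentially the same route as the paper: fix an arbitrary $(\scru,U_0)\in\Lambda$ with a canonical map, invoke Lemma \ref{inclusionlemma1} to obtain a refinement $\scrv$ with $\span(\scrv,x_0)\subseteq\ker(p_{\scru\#})$, and use that $\span(X,x_0)$ is contained in $\span(\scrv,x_0)$ because the absolute Spanier group is an intersection over all open covers. Your parenthetical about the mismatch of indexing sets is exactly the point the paper flags in the remark preceding Lemma \ref{inclusionlemma1}, and your handling of it is correct.
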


\begin{proof}
Suppose $[f]\in \span(X,x_0)$. Given a normal, based open cover $(\scru,U_0)\in \Lambda$ and any canonical map $p_{\scru}:X\to |N(\scru)|$, Lemma \ref{inclusionlemma1} ensures we can find a refinement $\scrv\succeq \scru$ such that $\span(\scrv,x_0)\subseteq \ker(p_{\scru\#})$. Thus $[f]\in \span(\scrv,x_0)\subseteq \ker(p_{\scru\#})$. Since $(\scru,U_0)$ is arbitrary, we conclude that $[f]\in \ker(\Psi_{n})$.
\end{proof}

\begin{example}[higher earring spaces]
An important space, which we will call upon repeatedly for examples, is the \textit{$n$-dimensional earring space} \[\bbe_n= \bigcup_{j\in\bbn}\left\{\bfx\in\bbr^{n+1}\mid \|\bfx-(1/j,0,0,\dots,0)\|=1/j\right\},\] which is a shrinking wedge (one-point union) of $n$-spheres with basepoint the origin ${\bf o}$. It is known that $\bbe_n$ is $(n-1)$-connected, locally $(n-1)$-connected, and $\pi_n$-shape injective for all $n\geq 1$ \cite{EK00higher,MM}. However, $\bbe_n$ is not semilocally $\pi_n$-trivial. Thus $\span(\scru,{\bf o})\neq 0$ for any open cover $\scru$ of $\bbe_n$ even though in the absolute case $\span(\bbe_n,{\bf o})$ is trivial.
\end{example}

\begin{example}
Let $n\geq 3$ and notice that $\bbe_1\vee \bbe_{n}$ is not semilocally $\pi_1$-trivial (since it has $\bbe_1$ as a retract) and therefore fails to be semilocally $(n-1)$-connected. However, it has recently been shown that $\pi_{k}(\bbe_1\vee \bbe_{n})=0$ for $2\leq k\leq n-1$ and that $\bbe_1\vee\bbe_{n}$ is $\pi_{n}$-shape injective \cite{BrazSequentialnconn}. Thus $\bbe_1\vee \bbe_{n}$ is semilocally $\pi_k$-trivial for all $k\leq n-1$ except $k=1$ and $\pi_{n}^{Sp}(\bbe_1\vee \bbe_{n},{\bf o})=0$. Thus the failure to be semilocally $n$-connected can occur at single dimension less than $n$.
\end{example}

\section{Recursive Extension Lemmas}\label{section4extensions}

Toward a proof of the inclusion $\ker(\Psi_n)\subseteq \span(X,x_0)$ for $LC^{n-1}$ space $X$, we introduce some convenient notation and definitions. If $\scru$ is an open cover and $A\subseteq X$, then $\St(A,\scru)=\bigcup\{U\in\scru\mid A\cap U\neq\emptyset\}$. Note that if $A\subseteq B$, then $\St(A,\scru)\subseteq \St(B,\scru)$. Also if $\scrv\succeq \scru$, then $\St(A,\scrv)\subseteq \St(A,\scru)$. We take the following terminology from \cite{Willard}.

\begin{definition}
Let $\scru,\scrv\in O(X)$. 
\begin{enumerate}
\item We say $\scrv$ is a \textit{barycentric-star refinement} of $\scru$ if for every $x\in X$, we have $\St(x,\scrv)\subseteq U$ for some $U\in\scru$. We write $\scrv\succeq_{\ast}\scru$.
\item We say $\scrv$ is a \textit{star refinement} of $\scru$ if for every $V\in\scrv$, we have $\St(V,\scrv)\subseteq U$ for some $U\in\scru$. We write $\scrv\succeq_{\ast\ast}\scru$.
\end{enumerate}
\end{definition}

Note that if $\scru\preceq_{\ast}\scrv\preceq_{\ast}\scrw$, then $\scru\preceq_{\ast\ast}\scrw$.

\begin{lemma}\cite{StoneParacompact}\label{paracompactlemma}
A $T_1$ space $X$ is paracompact if and only if for every $\scru\in O(X)$ there exists $\scrv\in O(X)$ such that $\scrv\succeq_{\ast} \scru$.
\end{lemma}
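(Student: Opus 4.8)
\textbf{Proof proposal for Lemma \ref{paracompactlemma}.}

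The statement is a classical characterization of paracompactness due to A.H. Stone, so the plan is to reprove it rather than cite a black box. I would organize the proof around the equivalence of three conditions for a $T_1$ space $X$: (a) $X$ is paracompact; (b) every open cover has an open barycentric-star refinement ($\scrv \succeq_\ast \scru$); (c) every open cover has an open star refinement ($\scrv \succeq_{\ast\ast} \scru$). The cycle I would run is (a) $\Rightarrow$ (c) $\Rightarrow$ (b) $\Rightarrow$ (a); since (c) $\Rightarrow$ (b) is immediate (a star refinement is in particular a barycentric-star refinement, because $\St(x,\scrv) \subseteq \St(V,\scrv)$ for any $V \in \scrv$ containing $x$, and such a $V$ exists since $\scrv$ covers), the real content is the other two implications.

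For (a) $\Rightarrow$ (c): given $\scru \in O(X)$, first pass to a locally finite open refinement $\scrw$, then (using $T_1$ plus local finiteness, via a shrinking-type argument) obtain a locally finite open refinement $\{W_\alpha'\}$ with $\ov{W_\alpha'} \subseteq W_\alpha$. For each $x \in X$ let $N(x)$ be an open neighborhood meeting only finitely many $W_\alpha$, and shrink $N(x)$ further so that: $N(x)$ meets $W_\alpha'$ only if $x \in W_\alpha$ (possible because $x$ lies outside the closed set $\ov{W_\alpha'}$ whenever $x \notin W_\alpha$, and only finitely many indices are relevant), and $N(x) \subseteq W_\alpha$ whenever $x \in W_\alpha'$. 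Then $\scrv = \{N(x) : x \in X\}$ is the desired star refinement: if $V = N(y)$ meets $V' = N(x)$, pick $\alpha$ with $y \in W_\alpha'$; one checks $x \in W_\alpha$, hence $V' \subseteq W_\alpha$, and concludes $\St(V,\scrv) \subseteq W_\alpha \subseteq$ some element of $\scru$.

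For (b) $\Rightarrow$ (a), the standard route is through the Michael/Stone machinery: from iterated barycentric-star refinements $\scru = \scrv_0 \preceq_\ast \scrv_1 \preceq_\ast \scrv_2 \preceq_\ast \cdots$ one manufactures a pseudometric-like ``size function'' or, more directly, one shows $X$ is \emph{fully normal} (every open cover has an open barycentric-star refinement is literally the definition of full normality) and then invokes Stone's theorem that fully normal $T_1$ spaces are paracompact. To keep the proof self-contained I would reproduce the key construction: use the chain of refinements to build, for each point, a nested neighborhood filter, define $U \sim_n V$ appropriately, extract a locally finite cover by a well-ordering argument on the index set of $\scru$ combined with the star condition to guarantee local finiteness and refinement simultaneously. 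The main obstacle — and the step I would be most careful with — is precisely this (b) $\Rightarrow$ (a) direction: the bookkeeping in the transfinite construction of a locally finite refinement from iterated star refinements is delicate, and it is the only place where the full strength of $T_1$ (or rather regularity, which $T_1$ plus full normality delivers) is genuinely used. Since this is the classical theorem of Stone cited as \cite{StoneParacompact}, in the actual writeup I would likely give the short argument for the easy implications and refer to Stone for the hard direction, noting that all three conditions are equivalent and that only the barycentric-star form is needed in what follows.
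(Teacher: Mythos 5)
The paper offers no proof of this lemma at all: the statement is Stone's 1948 theorem that a $T_1$ space is paracompact if and only if it is fully normal, and the citation \cite{StoneParacompact} is the entire justification. Your plan to reprove it is therefore strictly more than the paper attempts, and your eventual fallback --- cite Stone for the fully-normal-implies-paracompact direction --- lands exactly where the paper starts. Two remarks on your sketch. First, only the forward implication (paracompact $\Rightarrow$ a barycentric-star refinement exists) is ever used in this paper (it feeds Lemma \ref{refinementlemma2} and the refinement chains in Sections \ref{section5mainresult} and \ref{section6isomorphism}), so the transfinite direction you correctly flag as the main obstacle could be omitted without loss. Second, your verification of (a) $\Rightarrow$ (c) has a small mismatch: you impose $N(x)\subseteq W_\alpha$ only for those $\alpha$ with $x\in W_\alpha'$, but your check ``$x\in W_\alpha$, hence $V'\subseteq W_\alpha$'' needs the containment for every $\alpha$ with $x\in W_\alpha$. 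This is repairable --- local finiteness gives point-finiteness, so requiring $N(x)\subseteq\bigcap\{W_\alpha : x\in W_\alpha\}$ is still a finite intersection --- but the cleaner standard route is to verify only that $\{N(x)\}$ is a barycentric-star refinement (if $x\in N(y)$ and $x\in W_{\alpha_0}'$, then $N(y)$ meets $\ov{W_{\alpha_0}'}$, forcing $y\in \ov{W_{\alpha_0}'}$ and hence $N(y)\subseteq W_{\alpha_0}$), which is all the lemma asserts; genuine star refinements, where needed, then come from iterating via the observation already recorded in the paper that $\scru\preceq_{\ast}\scrv\preceq_{\ast}\scrw$ implies $\scru\preceq_{\ast\ast}\scrw$.
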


\begin{definition}\cite[Ch. I \S 3.2.5]{MS82}
Let $n\in \{0,1,2,3,\dots,\infty\}$. A space $X$ is $LC^n$ at $x\in X$ if for every neighborhood $U$ of $x$, there exists a neighborhood $V$ of $x$ such that $V\subseteq U$ and such that for all $0\leq k\leq n$ ($k<\infty$ if $n=\infty$), every map $f:\partial \Delta_{k+1}\to V$ extends to a map $g:\Delta_{k+1}\to U$. We say $X$ is $LC^n$ if $X$ is $LC^n$ at all of its points.
\end{definition}

We have the following evident implications for both the point-wise and global properties:
\[X\text{ is loc. }n\text{-connected}\,\, \Rightarrow\,\, X\text{ is }LC^n\,\,\Rightarrow\,\, X\text{ is semiloc. }n\text{-connected}\]
For first countable spaces, the $LC^{n}$ property is equivalent to the ``$n$-tame" property in \cite{BrazSequentialnconn} defined in terms of shrinking sequences of maps.

\begin{definition}
Suppose $\scrv\succeq \scru$ in $O(X)$.
\begin{enumerate}
\item We say $\scrv$ is an \textit{$n$-refinement of }$\scru$, and write $\scrv\succeq^{n} \scru$, if for all $0\leq k\leq n$, $V\in\scrv$, and maps $f:\partial \Delta_{k+1}\to V$, there exists $U\in\scru$ with $V\subseteq U$ and a continuous extension $g:\Delta_{k+1}\to U$ of $f$. 
\item We say $\scrv$ is an \textit{$n$-barycentric-star refinement of }$\scru$, and write $\scrv\succeq_{\ast}^{n} \scru$, if for every $0\leq k\leq n$, for every $x\in X$, and every map $f:\partial \Delta_{k+1}\to \St(x,\scrv)$, there exists $U\in\scru$ with $\St(x,\scrv)\subseteq U$ and a continuous extension $g:\Delta_{k+1}\to U$ of $f$.
\end{enumerate}
\end{definition}

Note that if $\scrv\succeq^{n}\scru$ (resp. $\scrv\succeq_{\ast}^{n}\scru$), then $\scrv\succeq^{k}\scru$ (resp. $\scrv\succeq_{\ast}^{k}\scru$) for all $0\leq k\leq n$.

\begin{lemma}\label{refinementlemma2}
Suppose $X$ is paracompact, Hausdorff, and $LC^n$. For every $\scru\in O(X)$, there exists $\scrv\in O(X)$ such that $\scrv\succeq_{\ast}^{n} \scru$.
\end{lemma}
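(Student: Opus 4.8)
The plan is to first invoke paracompactness to iterate barycentric-star refinements, and then superimpose on this a pointwise use of the $LC^n$ property, carefully accounting for the uniformity needed in the definition of $\scrv \succeq_\ast^n \scru$. Concretely, I would first apply Lemma \ref{paracompactlemma} to $\scru$ to obtain $\scrw_0 \succeq_\ast \scru$, i.e. $\St(x,\scrw_0) \subseteq$ some element of $\scru$ for every $x$. The subtlety is that the $LC^n$ property only gives, for each point $x$ and each neighborhood $U$, a \emph{smaller} neighborhood $V$ in which spheres contract; but we need this $V$ to be of the form $\St(x,\scrv)$ for a \emph{single} cover $\scrv$ working at all points simultaneously.

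The main idea to get around this is the following. For each $x \in X$, choose $U_x \in \scru$ with $\St(x,\scrw_0) \subseteq U_x$, and use $LC^n$ at $x$ to choose an open neighborhood $W_x$ of $x$ with $W_x \subseteq \St(x,\scrw_0)$ such that every map $\partial\Delta_{k+1}\to W_x$ (for $0\le k\le n$) extends over $\Delta_{k+1}$ into $\St(x,\scrw_0)\subseteq U_x$. The collection $\scrw_1=\{W_x \mid x\in X\}$ is an open cover of $X$; now apply Lemma \ref{paracompactlemma} twice more (or use the transitivity remark $\preceq_\ast\circ\preceq_\ast\ \Rightarrow\ \preceq_{\ast\ast}$) to get $\scrv \succeq_{\ast\ast} \scrw_1$ with also $\scrv \succeq \scrw_1$ refining everything. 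Then given any $x\in X$ and any $V\in\scrv$ with $x\in V$: since $\scrv$ star-refines $\scrw_1$, we have $\St(V,\scrv)\subseteq W_y$ for some $y$, so $\St(x,\scrv)\subseteq \St(V,\scrv)\subseteq W_y$. By the choice of $W_y$, any map $f:\partial\Delta_{k+1}\to \St(x,\scrv)\subseteq W_y$ extends to $g:\Delta_{k+1}\to \St(y,\scrw_0)$, and $\St(x,\scrv)\subseteq W_y\subseteq \St(y,\scrw_0)\subseteq U_y\in\scru$, so setting $U=U_y$ we get $\St(x,\scrv)\subseteq U$ together with the required extension. Hence $\scrv\succeq_\ast^n\scru$.

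The step I expect to be the main obstacle, or at least the one requiring care, is managing the quantifier structure: the definition of $\scrv \succeq_\ast^n \scru$ demands that for \emph{every} $x$, the \emph{same} open set $\St(x,\scrv)$ simultaneously serves as the ``small'' neighborhood in which all $\partial\Delta_{k+1}$-maps ($0\le k\le n$) live and as a set contained in some $U\in\scru$ over which they extend. The trick of passing from the pointwise-chosen $W_x$'s to a cover and then star-refining \emph{twice} is what converts the pointwise $LC^n$ data into the required uniform statement: the first star-refinement absorbs $\St(x,\scrv)$ into a single $W_y$, and we must check that $\scrv$ can be chosen to simultaneously refine $\scrw_0$ (so stars shrink) and star-refine $\scrw_1$. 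Both are arranged by taking a common refinement of $\scrw_0$ and $\scrw_1$ first and then applying Lemma \ref{paracompactlemma} to that. A minor point worth stating explicitly is that $\scrv\succeq\scru$ (genuine refinement) holds because $\scrv$ refines $\scrw_1$ which refines $\scrw_0$ which refines $\scru$; and the $k=0$ case is included automatically since $LC^n$ with $n\ge 0$ covers it. With these bookkeeping points handled, the proof is otherwise a routine concatenation of Lemma \ref{paracompactlemma} and the definition of $LC^n$.
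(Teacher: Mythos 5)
Your proposal is correct and follows essentially the same strategy as the paper: apply $LC^n$ pointwise to build a cover $\scrw$ of ``small'' neighborhoods with the extension property into elements of $\scru$, then use Lemma \ref{paracompactlemma} so that stars of points in the final cover land inside a single such neighborhood. The paper's version is slightly leaner --- it chooses $W(U,x)\subseteq U$ directly for each $U\in\scru$ and $x\in U$ (so no initial star-refinement $\scrw_0$ is needed) and finishes with a single barycentric-star refinement rather than a full star refinement --- but these are only bookkeeping simplifications of the same argument.
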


\begin{proof}
Let $\scru\in O(X)$. Since $X$ is $LC^n$, for every $U\in\scru$ and $x\in U$, there exists an open neighborhood $W(U,x)$ such that $W(U,x)\subseteq U$ and such that for all $0\leq k\leq n$, each map $f:\partial \Delta_{k+1}\to W(U,x)$ extends to a map $g:\Delta_{k+1}\to U$. Let $\scrw=\{W(U,x)\mid U\in\scru, x\in U\}$ and note $\scrw\succeq^{n} \scru$. Since $X$ is paracompact Hausdorff, by Lemma \ref{paracompactlemma}, there exists $\scrv\in O(X)$ such that $\scrv\succeq_{\ast}\scrw$. 

Fix $x'\in X$. Then $\St(x',\scrv)\subseteq W(U,x)$ for some $x\in U\in\scru$. Then $\St(x',\scrv)\subseteq U$. Moreover, if $0\leq k\leq n$ and $f:\partial \Delta_{k+1}\to \St(x',\scrv)$ is a map, then since $f$ has image in $W(U,x)$, there is an extension $g:\Delta_{k+1}\to U$. This verifies that $\scrv\succeq_{\ast}^{n} \scru$.
\end{proof}

For the next two lemmas, we fix $n\in\bbn$, a geometric simplicial complex $K$ with $\dim K=n+1$, and a subcomplex $L\subseteq K$ with $\dim L\leq n$. Let $M[k]=L\cup K_k$ denote the union of $L$ and the $k$-skeleton of $K$. Since $L\subseteq K_n$, $M[n]=K_n$ is the union of the boundaries of the $(n+1)$-simplices of $K$. Later we will consider the cases where (1) $K=\sd^m\Delta_{n+1}$ and $L=\sd^m\partial \Delta_{n+1}$ and (2) $K=\sd^m\partial \Delta_{n+2}$ and $L=\{{\bf o}\}$.

\begin{lemma}[Recursive Extensions]\label{recursionlemma}
Suppose $1\leq k\leq n$, $\scru\preceq_{\ast}\scrv\preceq_{\ast}^{k-1} \scrw$, $m\in\bbn$, and $f:M[k-1]\to X$ is a map such that for every $(n+1)$-simplex $\sigma$ of $K$, we have $f(\sigma\cap M[k-1])\subseteq W_{\sigma}$ for some $W_{\sigma}\in\scrw$. Then there exists a continuous extension $g:M[k]\to X$ of $f$ such that for every $(n+1)$-simplex $\sigma$ of $K$, we have $g(\sigma\cap M[k])\subseteq U_{\sigma}$ for some $U_{\sigma}\in\scru$.
\end{lemma}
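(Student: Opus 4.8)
The plan is to extend $f$ one $k$-simplex at a time, working simplex-by-simplex over the $k$-simplices of $K$ that are not already in $L$ (if $\tau\in L$ then $\tau\subseteq M[k-1]$ already and nothing needs to be done). Fix a $k$-simplex $\tau$ of $K$ with $\tau\notin L$. Its boundary $\partial\tau$ lies in $M[k-1]$, so $f|_{\partial\tau}$ is already defined; I will produce an extension over $\tau$ with small image, and then glue. The key local input is the chain of refinements $\scru\preceq_{\ast}\scrv\preceq_{\ast}^{k-1}\scrw$. Since $\tau$ is a $k$-simplex, $\partial\tau$ is homeomorphic to $\partial\Delta_k$, i.e.\ to the boundary of a $((k-1)+1)$-simplex; but the image of $f|_{\partial\tau}$ might be spread across several elements of $\scrw$, so a direct appeal to the $\scrv\preceq_{\ast}^{k-1}\scrw$ relation is not immediate. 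This is where the hypothesis that $f(\sigma\cap M[k-1])\subseteq W_\sigma$ for every $(n+1)$-simplex $\sigma$ comes in: every $k$-simplex $\tau$ of $K$ is a face of some top-dimensional simplex $\sigma$, and $\partial\tau\subseteq \sigma\cap M[k-1]$, so in fact $f(\partial\tau)\subseteq W_\sigma\in\scrw$, i.e.\ the boundary image does lie in a single element of $\scrw$ after all.

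Granting this, here is the core step. Fix $\tau$ and a top simplex $\sigma\supseteq\tau$, so $f(\partial\tau)\subseteq W_\sigma\in\scrw$. First subdivide to reduce diameters: replace $\tau$ by $\sd^m\tau$ (the subdivision already referenced in the hypotheses via $m$) — more precisely, I would first refine the picture by passing to a subdivision fine enough that each small $(k-1)$-simplex of $\sd^m\partial\tau$ has $f$-image inside $\St(x,\scrv)$ for a suitable point $x$; this uses continuity of $f$ and the Lebesgue-number-type argument on the compact set $\partial\tau$. Actually the cleanest route, matching the statement, is: because $f(\partial\tau)\subseteq W_\sigma$ and $\scrv\preceq_{\ast}^{k-1}\scrw$, and because $\partial\tau\cong\partial\Delta_k$ is the boundary of a $((k-1)+1)$-simplex with image in a single $\scrw$-set $W_\sigma$, the barycentric-star $k{-}1$-refinement property does not directly apply (it is phrased for maps into $\St(x,\scrv)$, not into $\scrw$-sets). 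So instead I would use the intermediate cover $\scrv$: choose a vertex-star structure — for each vertex $v$ of $\sd^m\partial\tau$ pick $V_v\in\scrv$ with $f(\St(v,\sd^m\partial\tau))\subseteq \St(f(v),\scrv)$ after enough subdivision — and build the extension over $\sd^m\tau$ skeleton-by-skeleton in dimensions $1,\dots,k$ using, at stage $j\le k-1$, that maps $\partial\Delta_{j+1}\to\St(x,\scrv)$ extend to $\Delta_{j+1}\to U$ for some $U\in\scru$. Carrying this internal induction through dimension $k-1$ produces a map on $\sd^m\tau$ (equivalently on $\tau$) whose image lies in a single $\scru$-set; since each such $\scru$-set is contained in the required small neighborhood, collecting these over all $\tau$ and all top simplices $\sigma$ gives for each $(n+1)$-simplex $\sigma$ a single $U_\sigma\in\scru$ with $g(\sigma\cap M[k])\subseteq U_\sigma$. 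The relation $\scru\preceq_{\ast}\scrv$ is what lets the finitely many $\scru$-sets used over the faces of a single $\sigma$ be absorbed into one $\scru$-set (a St-bound argument).

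The gluing is routine: the extensions over distinct $k$-simplices agree on shared faces because each is built to restrict to the already-fixed map on $M[k-1]$, and $M[k]=M[k-1]\cup\bigcup\tau$ carries the weak topology, so a map defined compatibly on each closed cell is continuous. The control condition passes to $g$ because for each top $\sigma$, $\sigma\cap M[k]=(\sigma\cap M[k-1])\cup\bigcup\{\tau\le\sigma:\dim\tau=k\}$ and each piece has image in a $\scru$-set that is contained in a common $\St(\,\cdot\,,\scrv)$-type set refined into $\scru$, which is exactly where $\scru\preceq_{\ast}\scrv$ is consumed.

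\textbf{Main obstacle.} The delicate point is the internal skeleton-by-skeleton extension over a single simplex while keeping the image inside \emph{one} element of $\scru$ at the end, rather than merely inside a union of $\scru$-sets. Naively extending over each simplex of $\sd^m\tau$ independently lands the image in many $\scru$-sets; controlling this requires choosing the subdivision $m$ large enough (depending on $f$, hence the hypothesis carries $m$ as a parameter) that all the vertex-choices $V_v\in\scrv$ over a single $\tau$ lie in a common star $\St(x,\scrv)$, so that the $\preceq_{\ast}$ step collapses them into one $\scru$-set. Verifying that such an $m$ exists — a compactness/Lebesgue-number estimate on the compact polyhedra $\sigma$ — and bookkeeping the nested inductions (outer induction on the simplex dimension $k$, as in Lemma~\ref{recursionlemma}'s statement, and inner induction on the skeleta of $\sd^m\tau$) is the technical heart; everything else is standard CW/simplicial gluing.
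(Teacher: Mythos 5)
Your overall skeleton (extend over each $k$-simplex $\tau\notin L$ separately, note $f(\partial\tau)\subseteq W_\sigma$ for any top simplex $\sigma\supseteq\tau$, glue, then absorb into a single $U_\sigma$ via $\scru\preceq_{\ast}\scrv$) matches the paper, but you go off the rails at the central step. You claim the relation $\scrv\preceq_{\ast}^{k-1}\scrw$ ``does not directly apply'' because it is ``phrased for maps into $\St(x,\scrv)$, not into $\scrw$-sets.'' This is a misreading of the direction of the refinement: $\scrv\preceq_{\ast}^{k-1}\scrw$ means $\scrw\succeq_{\ast}^{k-1}\scrv$, i.e.\ for $0\leq j\leq k-1$ every map $\partial\Delta_{j+1}\to\St(x,\scrw)$ (a star of the \emph{finer} cover $\scrw$) extends to $\Delta_{j+1}\to V$ for some $V\in\scrv$ containing $\St(x,\scrw)$. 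Since $W_\sigma\subseteq\St(x_\tau,\scrw)$ for any point $x_\tau\in W_\sigma$ (the paper takes $x_\tau=f(v_\tau)$ for a vertex $v_\tau$ of $\tau$), the map $f|_{\partial\tau}:\partial\tau\cong\partial\Delta_k\to W_\sigma$ is a map into such a star, and the case $j=k-1$ hands you an extension $\tau\to V_\tau$ with $\St(x_\tau,\scrw)\subseteq V_\tau\in\scrv$ in a single step. No subdivision of $\tau$ and no Lebesgue-number argument is needed; the parameter $m$ in the statement is not a free knob to be tuned against $f$.

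The workaround you propose instead, an internal skeleton-by-skeleton induction over $\sd^m\tau$ through dimensions $1,\dots,k$, is not just unnecessary but unsupportable by the hypotheses: each extension stage consumes one $\preceq_{\ast}^{j}$-type link in the chain of covers, and the lemma provides exactly one such link ($\scrv\preceq_{\ast}^{k-1}\scrw$) plus one plain barycentric-star link. A $k$-stage internal induction would need a chain of length $k$ as in Lemma \ref{appliedrecursionlemma}, which is precisely the \emph{consequence} of iterating this lemma, not an ingredient available inside its proof. It would also break your final absorption step: the paper's argument that all the $V_\tau$ for faces $\tau$ of a fixed $\sigma$ collapse into one $U_\sigma$ works because each $V_\tau$ contains $\St(x_\tau,\scrw)\supseteq W_\sigma$, so all of them contain a common point $y_\sigma\in W_\sigma$ and hence lie in $\St(y_\sigma,\scrv)\subseteq U_\sigma$; the many $\scru$-sets produced by your internal induction inside a single $\tau$ would have no such common point, and the ``St-bound argument'' you gesture at would not go through. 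The fix is simply to delete the internal induction and apply the $(k-1)$-extension property of $\scrw$ over $\scrv$ once per $k$-simplex $\tau$, exactly as above.
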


\begin{proof}
Supposing the hypothesis, we must extend $f$ to the $k$-simplices of $M[k]$ that do not lie in $L$. Let $\tau$ be a $k$-simplex of $M[k]$ that does not lie in $L$ and let $S_{\tau}$ be the set of $(n+1)$-simplices in $K$ that contain $\tau$. By assumption, $S_{\tau}$ is non-empty. We make some general observations first. Since $f$ maps the $(k-1)$-skeleton of each $(n+1)$-simplex $\sigma\in S_{\tau}$ into $W_{\sigma}$ and $\partial \tau$ lies in this $(k-1)$-skeleton, we have $f(\partial\tau)\subseteq \bigcap_{\sigma\in S_{\tau}}W_{\sigma}$. Thus, for all $\tau$, we have \[f(\partial \tau)\subseteq \bigcap_{\sigma\in S_{\tau}}\St(W_{\sigma},\scrv).\]

Fix a vertex $v_{\tau}$ of $\tau$ and let $x_{\tau}=f(v_{\tau})$. Then $x_{\tau}\in W_{\sigma}\subseteq \St(x_{\tau},\scrw)$ whenever $\sigma\in S_{\tau}$. Since $\scrw\succeq_{\ast}^{k-1}\scrv$, we may find $V_{\tau}\in \scrv$ such that $\St(x_{\tau},\scrw)\subseteq V_{\tau}$ and such that every map $\partial \Delta_{k}\to \St(x_{\tau},\scrw)$ extends to a map $\Delta_k\to V_{\tau}$. In particular, $f|_{\partial \tau}:\partial \tau\to W_{\sigma}$ extends to a map $\tau\to V_{\tau}$. We define $g:M[k]\to X$ so that it agrees with $f$ on $M[k-1]$ and so that the restriction of $g$ to $\tau$ is a choice of continuous extension $\tau\to V_{\tau}$ of $f|_{\partial \tau}$. 

We now choose the sets $U_{\sigma}$. Fix an $(n+1)$-simplex $\sigma$ of $K$. If the $k$-skeleton of $\sigma$ lies entirely in $L$, we choose any $U_{\sigma}\in\scru$ satisfying $W_{\sigma}\subseteq U_{\sigma}$. Suppose there exists at least one $k$-simplex in $\sigma$ not in $L$. Then whenever $\tau$ is a $k$-simplex of $\sigma$ not in $L$, we have $W_{\sigma}\subseteq \St(x_{\tau},\scrw)\subseteq  V_{\tau} $. Fix a point $y_{\sigma}\in W_{\sigma}$. The assumption that $\scrv\succeq_{\ast}\scru$ implies that there exists $U_{\sigma}\in\scru$ such that $\St(y_{\sigma},\scrv)\subseteq U_{\sigma}$. In this case, we have $W_{\sigma}\subseteq V_{\tau}\subseteq U_{\sigma}$ whenever $\tau$ is a $k$-simplex of $\sigma$ not in $L$.

Finally, we check that $g$ satisfies the desired property. Again, fix an $(n+1)$-simplex $\sigma$ of $K$. If $\tau$ is a $k$-simplex of $\sigma$ not in $L$, our definition of $g$ gives $g(\tau)\subseteq V_{\tau}\subseteq U_{\sigma}$. If $\tau '$ is a $k$-simplex in $\sigma\cap L$, then $g(\tau ')=f(\tau ')\subseteq W_{\sigma}\subseteq U_{\sigma}$. Overall, this shows that $g(\sigma \cap M[k])\subseteq U_{\sigma}$ for each $(n+1)$-simplex $\sigma$ of $K$.
\end{proof}

A direct, recursive application of the previous lemma is given in the following statement.

\begin{lemma}\label{appliedrecursionlemma}
Suppose there is a sequence of open covers 
\[\scrw_{n}\preceq_{\ast}\scrv_{n}\preceq_{\ast}^{n-1}\scrw_{n-1}\preceq_{\ast}\cdots \preceq_{\ast}^{2}\scrw_{2}\preceq _{\ast} \scrv_{2}\preceq_{\ast}^{1} \scrw_{1}\preceq_{\ast} \scrv_{1}\preceq_{\ast}^{0} \scrw_{0}\]
and a map $f_0:M[0]\to X$ such that for every $(n+1)$-simplex $\sigma$ of $K$, we have $f_0(\sigma\cap M[0])\subseteq W$ for some $W\in \scrw_{0}$. Then there exists an extension $f_{n}:M[n]\to X$ of $f_0$ such that for every $(n+1)$-simplex $\sigma$ of $K$, we have $f_{n}(\partial \sigma)\subseteq U$ for some $U\in \scrw_{n}$.
\end{lemma}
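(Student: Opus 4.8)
The plan is a finite induction on the skeleton index $k$, applying the Recursive Extensions lemma (Lemma \ref{recursionlemma}) once for each $k=1,2,\dots,n$, and then reading off the conclusion from the identification $M[n]=K_n$. Concretely, I would set up the inductive statement: for each $0\leq k\leq n$ there is a map $f_k:M[k]\to X$ extending $f_0$ such that for every $(n+1)$-simplex $\sigma$ of $K$ one has $f_k(\sigma\cap M[k])\subseteq W$ for some $W\in\scrw_k$. The base case $k=0$ is precisely the hypothesis of the lemma.

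For the inductive step, assume $f_{k-1}$ has been constructed for some $1\leq k\leq n$. The segment $\scrw_k\preceq_{\ast}\scrv_k\preceq_{\ast}^{k-1}\scrw_{k-1}$ of the given chain is exactly a chain of the form $\scru\preceq_{\ast}\scrv\preceq_{\ast}^{k-1}\scrw$ with $\scru=\scrw_k$, $\scrv=\scrv_k$, and $\scrw=\scrw_{k-1}$; and the defining property of $f_{k-1}$ --- namely $f_{k-1}(\sigma\cap M[k-1])\subseteq W_\sigma$ for some $W_\sigma\in\scrw_{k-1}$ --- is exactly the hypothesis imposed on the map $f$ in Lemma \ref{recursionlemma}. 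Hence that lemma produces a continuous extension $f_k:M[k]\to X$ of $f_{k-1}$, and therefore of $f_0$, with $f_k(\sigma\cap M[k])\subseteq U_\sigma$ for some $U_\sigma\in\scrw_k$ for every $(n+1)$-simplex $\sigma$. This is the inductive statement at level $k$, so the induction closes.

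Taking $k=n$ yields $f_n:M[n]\to X$ extending $f_0$ with $f_n(\sigma\cap M[n])\subseteq U$ for some $U\in\scrw_n$ for every $(n+1)$-simplex $\sigma$. To finish I would observe that since $\dim L\leq n$ we have $M[n]=L\cup K_n=K_n$, and that for an $(n+1)$-simplex $\sigma$ of $K$ the intersection $\sigma\cap K_n$ is precisely $\partial\sigma$; substituting gives $f_n(\partial\sigma)\subseteq U$ for some $U\in\scrw_n$, which is the claimed conclusion. There is no genuine obstacle here: all the geometric content lives in Lemma \ref{recursionlemma}, and the remaining work is bookkeeping --- checking that the ``output'' cover $\scru=\scrw_k$ of each application coincides with the ``input'' cover $\scrw$ of the next (which is exactly what the staggered arrangement of the $\scrw_j$ and $\scrv_j$ in the hypothesis guarantees), and keeping track that the successive extensions remain compatible so that $f_n$ still restricts to $f_0$. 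The only step worth stating explicitly is the identity $\sigma\cap M[n]=\partial\sigma$, which converts the uniform statement about $\sigma\cap M[n]$ into the statement about boundaries of top-dimensional simplices that the later sections actually use.
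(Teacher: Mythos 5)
Your proof is correct and follows exactly the route the paper intends: the paper gives no written proof of this lemma beyond the remark that it is ``a direct, recursive application of the previous lemma,'' and your finite induction applying Lemma \ref{recursionlemma} with $(\scru,\scrv,\scrw)=(\scrw_k,\scrv_k,\scrw_{k-1})$ at each step, followed by the observation that $\sigma\cap M[n]=\partial\sigma$, is precisely that argument spelled out.
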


\section{A proof of Theorem \ref{maintheorem}}\label{section5mainresult}

We apply the extension results of the previous section in the case where $K=\sd^{m}\Delta_{n+1}$ for some $m\in\bbn$ and $L=\sd^{m}\partial \Delta_{n+1}$ so that $M[k]=L\cup K_k$ consists of the $n$-simplices of the boundary of $\Delta_{n+1}$ and the $k$-simplices of $\sd^m\Delta_{n+1}$ not in the boundary. Note that $M[n]$ is the union of the boundaries of the $(n+1)$-simplices of $\sd^m\Delta_{n+1}$.

\begin{lemma}\label{secondinclusionlemma}
Let $n\geq 1$. Suppose $X$ is paracompact, Hausdorff, and $LC^{n-1}$. Then for every open cover $\scru$ of $X$, there exists $(\scrv,V_0)\in \Lambda$ such that $\ker(p_{\scrv\#})\subseteq \span(\scru,x_0)$.
\end{lemma}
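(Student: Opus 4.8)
The plan is to refine $\scru$ to a cover $(\scrv,V_0)\in\Lambda$ that is fine enough so that a nullhomotopy of $p_{\scrv}\circ f$ inside $|N(\scrv)|$ produces a ``$\scrw_0$-small'' map on the $0$-skeleton of a barycentric subdivision of $\Delta_{n+1}$; then I would push that map up the skeleta using the recursive extension Lemma \ref{appliedrecursionlemma}, and finally extract membership in $\span(\scru,x_0)$ from Lemma \ref{emptyroomlemma}. Concretely, I first fix a tower of covers: set $\scrw_n=\scru$ and, alternately invoking Lemma \ref{paracompactlemma} (for barycentric-star refinements) and Lemma \ref{refinementlemma2} (for $(n-1)$-barycentric-star refinements, which are in particular $k$-barycentric-star refinements for every $0\le k\le n-1$), build a chain
\[\scrw_{n}\preceq_{\ast}\scrv_{n}\preceq_{\ast}^{n-1}\scrw_{n-1}\preceq_{\ast}\cdots \preceq_{\ast}^{2}\scrw_{2}\preceq _{\ast} \scrv_{2}\preceq_{\ast}^{1} \scrw_{1}\preceq_{\ast} \scrv_{1}\preceq_{\ast}^{0} \scrw_{0}\]
of exactly the form required in Lemma \ref{appliedrecursionlemma}. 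Picking $W_0\in\scrw_0$ with $x_0\in W_0$ and using that $\Lambda$ is cofinal in $\mathcal{O}(X,x_0)$ for paracompact Hausdorff $X$, I fix $(\scrv,V_0)\in\Lambda$ refining $(\scrw_0,W_0)$ and a canonical map $p_{\scrv}:X\to|N(\scrv)|$; this is the $\scrv$ I claim works.

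Next, take $[f]\in\ker(p_{\scrv\#})$ represented by a based map $f:(\partial\Delta_{n+1},{\bf o})\to(X,x_0)$. Then $p_{\scrv}\circ f$ is null-homotopic rel basepoint, hence extends to a map $\bar H:(\Delta_{n+1},{\bf o})\to(|N(\scrv)|,V_0)$ with $\bar H|_{\partial\Delta_{n+1}}=p_{\scrv}\circ f$ (under the fixed identification $\partial\Delta_{n+1}\cong S^n$). Since the open stars $\st(A,N(\scrv))$, $A\in\scrv$, cover $|N(\scrv)|$, I choose $m\in\bbn$ so large that every simplex $\sigma$ of $K:=\sd^m\Delta_{n+1}$ satisfies $\bar H(\sigma)\subseteq\st(A_\sigma,N(\scrv))$ for some vertex $A_\sigma\in\scrv$, and I set $L=\sd^m\partial\Delta_{n+1}$, $M[k]=L\cup K_k$ as in Section \ref{section5mainresult}. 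Now define $f_0:M[0]\to X$ by $f_0|_L=f$ and, for each vertex $w$ of $K$ not in $L$, let $f_0(w)$ be any point of $\bigcap\{A\in\scrv:\bar H(w)\in\st(A,N(\scrv))\}$ (nonempty, since those vertices span a simplex of $N(\scrv)$). Because $|L|$ is closed in $|K|$ and the remaining vertices are isolated in $M[0]$, $f_0$ is continuous, and $f_0({\bf o})=x_0$. The crucial point is that $f_0$ meets the hypothesis of Lemma \ref{appliedrecursionlemma}: for an $(n+1)$-simplex $\sigma$ of $K$ and $x\in\sigma\cap L$ we get $p_{\scrv}(f(x))=\bar H(x)\in\st(A_\sigma,N(\scrv))$, so $f(x)\in p_{\scrv}^{-1}(\st(A_\sigma,N(\scrv)))\subseteq A_\sigma$ by the canonical-map property; and for each vertex $w$ of $\sigma$ we have $\bar H(w)\in\st(A_\sigma,N(\scrv))$, which forces $f_0(w)\in A_\sigma$ (directly when $w\in L$ as just shown, and by construction when $w\notin L$). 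Hence $f_0(\sigma\cap M[0])\subseteq A_\sigma$, and as $\scrv$ refines $\scrw_0$, $f_0(\sigma\cap M[0])\subseteq W$ for some $W\in\scrw_0$.

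Applying Lemma \ref{appliedrecursionlemma} to this chain and $f_0$ produces an extension $f_n:M[n]=(\sd^m\Delta_{n+1})_n\to X$ of $f_0$ with $f_n(\partial\sigma)\subseteq U_\sigma$ for some $U_\sigma\in\scrw_n=\scru$, for every $(n+1)$-simplex $\sigma$ of $K$. Since $f_n$ extends $f_0$, we have $f_n|_L=f$ and $f_n({\bf o})=x_0$. By Lemma \ref{emptyroomlemma}, $(f_n)_{\#}\bigl(\pi_n((\sd^m\Delta_{n+1})_n,{\bf o})\bigr)\subseteq\span(\scru,x_0)$. Finally, because $|L|=|\partial\Delta_{n+1}|\cong S^n$, the inclusion $\iota:L\hookrightarrow(\sd^m\Delta_{n+1})_n$ represents a class $[\iota]\in\pi_n((\sd^m\Delta_{n+1})_n,{\bf o})$ with $(f_n)_{\#}[\iota]=[f_n|_L]=[f]$; therefore $[f]\in\span(\scru,x_0)$. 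As $[f]\in\ker(p_{\scrv\#})$ was arbitrary, $\ker(p_{\scrv\#})\subseteq\span(\scru,x_0)$.

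I expect the main obstacle to be the construction of $f_0$: one must produce a single map that both restricts to the prescribed $f$ on $L$ and is $\scrw_0$-small on each $(n+1)$-simplex of $K$, and this rests precisely on pairing the carrying of $\bar H$ by the nerve's open stars with the defining inclusion $p_{\scrv}^{-1}(\st(A,N(\scrv)))\subseteq A$ of a canonical map. The remaining ingredients are routine bookkeeping around the already-established Lemmas \ref{appliedrecursionlemma} and \ref{emptyroomlemma}, although one should still verify the minor points that $\Lambda$ is cofinal, that null-homotopic based maps into $|N(\scrv)|$ extend over $\Delta_{n+1}$ restricting to the original map on $\partial\Delta_{n+1}$, and that passing to a barycentric subdivision disturbs neither the identification $|\partial\Delta_{n+1}|\cong S^n$ nor the basepoint ${\bf o}$.
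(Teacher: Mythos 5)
Your proposal is correct and follows essentially the same route as the paper: the same tower of refinements feeding Lemma \ref{appliedrecursionlemma}, an extension of $p_{\scrv}\circ f$ over $\Delta_{n+1}$, a fine barycentric subdivision to build a $\scrw_0$-small $f_0$ on $M[0]$, and then Lemma \ref{emptyroomlemma} to conclude. The only (harmless, arguably cleaner) deviation is in constructing $f_0$: the paper takes a simplicial approximation $h'$ with the star condition and therefore needs one extra barycentric-star refinement $\scru_0\preceq_{\ast}\scrv$ to gather the various $V_{a_i}$ into a single $U_{\sigma}$, whereas you choose $m$ so that each closed simplex of $\sd^m\Delta_{n+1}$ is carried into a single open star $\st(A_{\sigma},N(\scrv))$ and place each new vertex in $\bigcap\{A:\bar H(w)\in\st(A,N(\scrv))\}$, which makes an ordinary refinement of $\scrw_0$ suffice.
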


\begin{proof}
Suppose $\scru\in O(X)$. Since $X$ is paracompact, Hausdorff, and $LC^{n-1}$, we may apply Lemmas \ref{paracompactlemma} and \ref{refinementlemma2} to first find a sequence of refinements
:\[\scru=\scru_{n}\preceq_{\ast}\scrv_{n}\preceq_{\ast}^{n-1}\scru_{n-1}\preceq_{\ast}\cdots \preceq_{\ast}^{2}\scru_{2}\preceq _{\ast} \scrv_{2}\preceq_{\ast}^{1} \scru_{1}\preceq_{\ast} \scrv_{1}\preceq_{\ast}^{0} \scru_{0}\]
and then one last refinement $\scru_{0}\preceq_{\ast} \scrv_0=\scrv$. Let $V_0\in\scrv$ be any set containing $x_0$ and recall that since $X$ is paracompact Hausdorff $(\scrv,V_0)\in \Lambda$. We will show that $\ker (p_{\scrv\#})\subseteq \span(\scru,x_0)$. Note that $p_{\scrv}^{-1}(\st(V,N(\scrv)))\subseteq V$ by the definition of canonical map $p_{\scrv}$.

Suppose $[f]\in \ker (p_{\scrv\#})$ is represented by a map $f:(|\partial \Delta_{n+1}|,{\bf o})\to (X,x_0)$. We will show that $[f]\in\span(\scru,x_0)$. Then $p_{\scrv}\circ f:|\partial \Delta_{n+1}|\to |N(\scrv)|$ is null-homotopic and extends to a map $h:|\Delta_{n+1}|\to |N(\scrv)|$. Set $Y_V=h^{-1}(\st(V,N(\scrv)))$ so that $\scry=\{Y_V\mid V\in\scrv\}$ is an open cover of $|\Delta_{n+1}|$.

We find a particular simplicial approximation for $h$ using the cover $\scry$ \cite[Theorem 16.1]{Mu84}: let $\lambda$ be a Lebesgue number for $\scry$ so that any subset of $\Delta_{n+1}$ of diameter less than $\lambda$ lies in some element of $\scry$. Find $m\in\bbn$ such that each simplex in $\sd^{m}\Delta_{n+1}$ has diameter less than $\lambda/2$. Thus the star $\st(a,\sd^{m}\Delta_{n+1})$ of each vertex $a$ in $\sd^{m}\Delta_{n+1}$ lies in a set $Y_{V_a}\in\scry$ for some $V_a\in \scrv$. The assignment $a\mapsto V_a$ on vertices extends to a simplicial approximation $h':\sd^{m}\Delta_{n+1}\to N(\scrv)$ of $h$, i.e. a simplicial map $h'$ such that \[h(\st(a,\sd^{m}\Delta_{n+1}))\subseteq \st(h'(a),N(\scrv))=\st(V_a,N(\scrv))\] for each vertex $a$ \cite[Lemma 14.1]{Mu84}.

Let $K=\sd^{m}\Delta_{n+1}$ and $L=\sd^m\partial \Delta_{n+1}$ so that $M[k]=L\cup K_k$. First, we extend $f:L\to X$ to a map $f_0:M[0]\to X$. For each vertex $a$ in $K$, pick a point $f_0(a)\in V_a$. In particular, if $a\in L$, take $f_0(a)=f(a)$. This choice is well defined since, for a boundary vertex $a\in L$, we have
$p_{\scrv}\circ f(a)=  h(a)\in \st(V_a,|N(\scrv)|)$ and thus $f(a)\in p_{\scrv}^{-1}(\st(V_a,|N(\scrv|)))\subseteq V_a$.

Note that $h'$ maps every simplex $\sigma=[a_0,a_1,\dots,a_k]$ of $K$ to the simplex of $N(\scrv)$ spanned by $\{h'(a_i)\mid 0\leq i\leq k\}=\{V_{a_i}\mid 0\leq i\leq k\}$. By definition of the nerve, we have $\bigcap\{V_{a_i}\mid 0\leq i\leq k\}\neq \emptyset$. Pick a point $x_{\sigma}\in \bigcap\{V_{a_i}\mid 0\leq i\leq k\}$. 

By our initial choice of refinements, we have $\scru_0\preceq_{\ast} \scrv$. If $\sigma=[a_0,a_1,\dots,a_{n+1}]$ is an $(n+1)$-simplex of $K$, then $\St(x_{\sigma},\scrv)\subseteq U_{\sigma}$ for some $U_{\sigma}\in\scru$. In particular $\{f_0(a_i)\mid 0\leq i\leq n+1\}\subseteq \bigcup\{V_{a_i}\mid 0\leq i\leq n+1\}\subseteq U_{\sigma}$. Thus $f_0$ maps the $0$-skeleton of $\sigma$ into $U_{\sigma}$. If $1\leq k\leq n$, $\tau$ is a $k$-face of $\sigma\cap L$ with $a_i\in \tau$, then $p_{\scrv}\circ f_0(\int(\tau))=p_{\scrv}\circ f(\int(\tau))=h(\int(\tau))\subseteq h(\st(a_i,K))\subseteq \st(V_{a_i},|N(\scrv)|)$. It follows that \[f_0(\tau)\subseteq p_{\scrv}^{-1}(\st(V_{a_i},|N(\scrv)|))\subseteq V_{a_i}\subseteq U_{\sigma}.\]
Thus for every $n$-simplex in $\sigma\cap L$, we have $f_0(\tau)\subseteq U_{\sigma}$. We conclude that for every $(n+1)$-simplex $\sigma$ of $K$, we have $f_0(\sigma \cap M[0])\subseteq U_{\sigma}$.

By our choice of sequence of refinements, we are precisely in the situation to apply Lemma \ref{appliedrecursionlemma}. Doing so, we obtain an extension $f_{n}:M[n]\to X$ of $f$ such that for every $(n+1)$-simplex $\sigma$ of $K$, we have $f_{n}(\partial \sigma)\subseteq \mathbf{U}_{\sigma}$ for some $\mathbf{U}_{\sigma}\in\scru_{n}=\scru$. By Lemma \ref{emptyroomlemma}, we have $[f]=[f_{n}|_{\partial\Delta_{n+1}}]\in \pi_{n}^{Sp}(\scru,x_0)$.
\end{proof}

Finally, both inclusions have been established and provide a proof of our main result.

\begin{proof}[Proof of Theorem \ref{maintheorem}]

The inclusion $\pi_{n}^{Sp}(X,x_0)\subseteq \ker(\Psi_{n})$ holds in general by Corollary \ref{firstinclusioncor}. Under the given hypotheses, the inclusion $\ker(\Psi_{n})\subseteq \pi_{n}^{Sp}(X,x_0)$ follows from Lemma \ref{secondinclusionlemma}.
\end{proof}

When considering examples relevant to Theorem \ref{maintheorem}, it is helpful to compare $\pi_n$-shape injectivity with the following weaker property from \cite{GHnhomhausd}.

\begin{definition}
We say a space $X$ is \textit{$n$-homotopically Hausdorff at} $x\in X$ if no non-trivial element of $\pi_n(X,x)$ has a representing map in every neighborhood of $x$. We say $X$ is \textit{$n$-homotopically Hausdorff} if it is $n$-homotopically Hausdorff at each of its points.
\end{definition}

Clearly, $\pi_n$-shape injectivity $\Rightarrow$ $n$-homotopically Hausdorff. The next example, which highlights the effectiveness of Theorem \ref{maintheorem}, shows the converse is not true even for $LC^{n-1}$ Peano continua.

\begin{example}\label{mappingtorusexample}
Fix $n\geq 2$ and let $\ell_j:S^n\to \bbe_n$ be the inclusion of the $j$-th sphere and define $f:\bbe_n\to\bbe_n$ to be the shift map given by $f\circ \ell_j=\ell_{j+1}$. Let $M_f=\bbe_n\times [0,1]/\mathord{\sim}$, $(x,0)\mathord{\sim}(f(x),1)$ be the mapping torus of $f$. We identify $\bbe_n$ with the image of $\bbe_n\times\{0\}$ in $M_f$ and take ${\bf o}$ to be the basepoint of $M_f$. Let $\alpha:\ui\to M_f$ be the loop where $\alpha(t)$ is the image of $({\bf o},t)$. Then $M_f$ is locally contractible at all points other than those in the image of $\alpha$. Also, every point $\alpha(t)$ has a neighborhood that deformation retracts onto a homeomorphic copy of $\bbe_n$. Thus, since $\bbe_n$ is $LC^{n-1}$, so is $X$. It follows from Theorem \ref{maintheorem} that $\span(M_f,{\bf o})=\ker(\pi_n(M_f,{\bf o})\to \check{\pi}_n(M_f,{\bf o}))$. In particular, the Spanier group of $M_f$ contains all elements $[\alpha^k \ast g]$ where $g:S^n\to \bbe_n$ is a based map and $k\in\bbz$. Using the universal covering map $E\to M_f$ that ``unwinds" $\alpha$ and the relation $[g]=[\alpha\ast (f\circ g)]$ in $\pi_n(M_f,{\bf o})$, it is not hard to show that these are, in fact, the only elements of the $n$-th Spanier group. Hence, \[\ker(\pi_n(M_f,{\bf o})\to \check{\pi}_n(M_f,{\bf o}))=\{[\alpha^{k}\ast g]\mid [g]\in \pi_n(\bbe_n,{\bf o}),k\in\bbz\},\] which is an uncountable subgroup. Moreover, since $M_f$ is shape equivalent to the aspherical space $S^1$, we have $\check{\pi}_n(M_f,{\bf o})=0$ and thus $\pi_n(M_f,{\bf o})=\{[\alpha^{k}\ast g]\mid [g]\in \pi_n(\bbe_n,{\bf o}),k\in\bbz\}$.

It follows from this description that, even though $M_f$ is not $\pi_n$-shape injective, $M_f$ is $n$-homotopically Hausdorff. Indeed, it suffices to check this at the points $\alpha(t)$, $t\in\ui$. We give the argument for $\alpha(0)={\bf o}$, the other points are similar. If $0\neq h\in \pi_n(M_f,{\bf o})$ has a representative in every neighborhood of ${\bf o}$ in $M_f$, then clearly $h\in \ker(\Psi_n)$. Hence, $h=[\alpha^{k}\ast g]$ for $[g]\in \pi_n(\bbe_n,{\bf o})$ and $k\in\bbz$. Since $M_f$ retracts onto the circle parameterized by $\alpha$, the hypothesis on $h$ can only hold if $k=0$. However, there is a basis of neighborhoods of ${\bf o}$ in $M_f$ that deformation retract onto an open neighborhood of ${\bf o}$ in $\bbe_n$. Thus $[g]$ has a representative in every neighborhood of ${\bf o}$ in $\pi_n(\bbe_n,{\bf o})$, giving $h=[g]\in \ker(\pi_n(\bbe_n,{\bf o})\to \check{\pi}_n(\bbe_n,{\bf o}))=0$.
\end{example}

It is an important feature of Example \ref{mappingtorusexample} that $M_f$ is not simply connected and has multiple points at which it is not semilocally $\pi_n$-trivial. This motivates the following application of Theorem \ref{maintheorem}, which identifies a partial converse of the implication $\pi_n$-shape injective $\Rightarrow$ $n$-homotopically Hausdorff.

\begin{corollary}
Let $n\geq 2$ and $X$ be a simply-connected, $LC^{n-1}$, compact Hausdorff space such that $X$ fails to be semilocally $\pi_n$-trivial only at a single point $x\in X$. Then for every element $g\in \ker(\Psi_n)\subseteq \pi_n(X,x)$ and neighborhood $V$ of $x$, $g$ is represented by a map with image in $V$. In particular, if $X$ is $n$-homotopically Hausdorff at $x$, then $X$ is $\pi_n$-shape injective.
\end{corollary}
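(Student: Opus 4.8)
The plan is to deduce the corollary from Theorem \ref{maintheorem}. A compact Hausdorff space is paracompact Hausdorff, so Theorem \ref{maintheorem} applies with basepoint $x$ and yields $\ker(\Psi_n)=\pi_{n}^{Sp}(X,x)=\bigcap_{\scru}\pi_{n}^{Sp}(\scru,x)$; in particular the given $g$ lies in $\pi_{n}^{Sp}(\scru,x)$ for \emph{every} open cover $\scru$ of $X$. So the task reduces to producing, for a fixed neighborhood $V$ of $x$, one open cover $\scru$ with respect to which every element of $\pi_{n}^{Sp}(\scru,x)$ is represented by a map with image in $V$.

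To build this cover, use that $n\ge 2$ forces $X$ to be $LC^{n-1}$ and hence $LC^{0}$: there is an open neighborhood $V_{0}$ with $x\in V_{0}\subseteq V$ such that any two points of $V_{0}$ are joined by a path in $V$. For each $y\in X$ with $y\ne x$, the space is semilocally $\pi_n$-trivial at $y$, so $y$ has an open neighborhood on which every map from $S^{n}$ is null-homotopic in $X$; intersecting it with the open set $X\setminus\{x\}$ (available since $X$ is $T_{1}$) produces an open neighborhood $W_{y}\ni y$ with $x\notin W_{y}$ that still has this property. Then $\scru=\{V_{0}\}\cup\{W_{y}\mid y\ne x\}$ is an open cover of $X$, and by the first paragraph we may write $g=\prod_{i=1}^{m}[\alpha_{i}\ast f_{i}]$ with $f_{i}(d_{0})=\alpha_{i}(1)$, each $\alpha_i$ a path from $x$, and $f_{i}(S^{n})\subseteq U_{i}$ for some $U_{i}\in\scru$.

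Next I would discard the ``far'' factors: if $U_{i}=W_{y_{i}}$ for some $y_i$, then $f_{i}$ is null-homotopic in $X$, so $[f_i]=0$ in $\pi_n(X,\alpha_i(1))$ and $[\alpha_{i}\ast f_{i}]=\varphi_{\alpha_{i}}([f_{i}])=0$. Deleting these and reindexing, we may assume every $U_i$ equals $V_0$, so $f_{i}(S^{n})\subseteq V_{0}$ and $\alpha_{i}(1)\in V_{0}$ for $i=1,\dots,\ell$. Here simple connectivity enters: for each such $i$, choose a path $\gamma_{i}$ \emph{in} $V$ from $x$ to $\alpha_{i}(1)$ (possible since both points lie in $V_{0}$); since $\pi_{1}(X,x)$ is trivial, $\gamma_{i}$ and $\alpha_{i}$ are homotopic rel endpoints, so $[\gamma_{i}\ast f_{i}]=[\alpha_{i}\ast f_{i}]$. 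The key point is that, directly from the defining formula for path-conjugation, the image of $\gamma_{i}\ast f_{i}$ lies in $f_{i}(S^{n})\cup\gamma_{i}(I)\subseteq V$, while $\gamma_{i}\ast f_{i}$ is based at $x$. Since the group operation on $\pi_{n}(X,x)$ is realized by $n$-loop concatenation and a concatenation of maps with image in $V$ again has image in $V$, the single map $(\gamma_{1}\ast f_{1})\cdot(\gamma_{2}\ast f_{2})\cdot\cdots\cdot(\gamma_{\ell}\ast f_{\ell}):S^{n}\to V$ represents $g$. I expect this last reduction to be the main obstacle: the conjugating paths $\alpha_{i}$ coming out of the Spanier-group machinery may roam all over $X$, and it is exactly simple connectivity that lets one reroute each of them through the small set $V$ without changing its class (and Example \ref{mappingtorusexample} shows the statement genuinely fails without this hypothesis).

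Finally, the ``in particular'' clause is immediate: if $X$ is $n$-homotopically Hausdorff at $x$ and $0\ne g\in\ker(\Psi_n)\subseteq\pi_n(X,x)$, the statement just proved gives a representative of $g$ in every neighborhood of $x$, contradicting the definition; hence $\ker(\Psi_n)$ is trivial at $x$. By Remark \ref{basepointinvarianceremark} together with Theorem \ref{maintheorem}, $\ker(\Psi_n)=\pi_{n}^{Sp}(X,\cdot)$ has the same isomorphism type at every basepoint, so $X$ is $\pi_n$-shape injective.
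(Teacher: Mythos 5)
Your proof is correct, but it takes a genuinely different and in fact leaner route than the paper at the key step. The paper restricts (via compactness) to the cofinal family of \emph{finite} covers with a unique element containing $x$, decomposes $g$ relative to each such cover, and then runs a net/subnet compactness argument: if the non-null-homotopic factors $f_{\scrv,i}$ did not eventually have image in $V'$, one could extract a convergent subnet of witness points whose limit $y$ would be a second point of non-semilocal-$\pi_n$-triviality unless $y=x$, and the case $y=x$ is ruled out by the same argument applied to $V'$. You bypass all of this by building a single cover tailored to $V$: the element $V_0$ at $x$ plus elements $W_y$ elsewhere on which every $n$-sphere dies in $X$, so that the factors of $g$ supported on the $W_y$ are literally the identity and can be deleted, leaving only factors with image in $V_0$. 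From that point on the two arguments coincide: simple connectivity reroutes the conjugating paths through $V$, and concatenation keeps the image in $V$. Your version buys two things: it is shorter, and it never uses compactness except to know that $X$ is paracompact Hausdorff (so Theorem \ref{maintheorem} applies), which shows the corollary actually holds for paracompact Hausdorff $X$; the paper's version, by localizing the non-trivial factors for \emph{every} sufficiently fine cover rather than for one hand-picked cover, gives slightly more information about how arbitrary Spanier decompositions of $g$ concentrate near $x$, but that extra information is not needed for the stated conclusion. Your handling of the ``in particular'' clause via Remark \ref{basepointinvarianceremark} is also slightly more careful than the paper, which leaves the basepoint-independence of shape injectivity implicit.
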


\begin{proof}
Let $0\neq g\in \ker(\Psi_{n})\subseteq \pi_n(X,x)$. By Theorem \ref{maintheorem}, $g\in \span(X,x)$. Since $X$ is compact Hausdorff, we may replace $O(X)$ by the cofinal sub-directed order $O_F(X)$ consisting of finite open covers $\scru$ of $X$ with the property that there is a unique $A_{\scru}\in \scru$ with $x\in A_{\scru}$. For each $\scru\in O_F(X)$, we can write $g=\prod_{i=1}^{m_{\scru}}[\alpha_{\scru,i}\ast f_{\scru,i}]$ where $f_{\scru,i}:S^n\to  U_{\scru,i}$ is a non-null-homotopic map for some $U_{\scru,i}\in \scru$ and $\alpha_{\scru,i}$ is a path from $x$ to $f_{\scru,i}(d_0)$. 

Let $V$ be an open neighborhood of $x$. We check that $g$ is represented by a map with image in $V$. Since $X$ is $LC^{0}$ at $x$, there exists an open neighborhood $V'$ of $x$ such that any two points of $V'$ may be connected by a path in $V$. Fix $\scru_0\in O_F(X)$ such that $A_{\scru_0}\subseteq V'$. Then $A_{\scrv}\subseteq V'$ whenever $\scrv\in O_F(X)$ refines $\scru_0$. 

We claim that for sufficiently refined $\scrv$, all of the maps $f_{\scrv,i}$ have image in $V'$. Suppose, to obtain a contradiction, there is a subset $T\subseteq \{\scrv\in O_F(X)\mid \scrv\succeq\scru_0\}$, which is cofinal in $O_F(X)$ and such that for every $\scrv\in T$ there exists $i_{\scrv}\in \{1,2,\dots ,m_{\scrv}\}$ and $d_{\scrv}\in S^n$ such that $f_{\scrv,i_{\scrv}}(d_{\scrv})\in U_{\scrv,i}\backslash V'\subseteq  U_{\scrv,i}\backslash A_{\scru_0}$. Since $X$ is compact, we may replace $\{f_{\scrv,i_{\scrv}}(d_{\scrv})\}$ with a subnet $\{x_j\}_{j\in J}$ that converges to a point $y\in X$. Here, $x_j=f_{\scrv_j,i_{\scrv_j}}(d_{\scrv_j})$ for some directed set $J$ and monotone, final function $J\to T$ given by $j\mapsto \scrv_j$. Let $Y$ be an open neighborhood of $y$ in $X$. Find $\scrw\in O_F(X)$ such that there exists $W_0\in\scrw$ such that $y\in W_0$ and $\St(W_0,\scrw)\subseteq Y$. Since $\{x_j\}$ is subnet that converges to $y$, there exists $k\in J$ such that $\scrv_k\succeq \scrw$ and $x_{k}\in W_0$. We have $x_k\in \im( f_{\scrv_k,i_{\scrv_k}})\subseteq U_{\scrv_k,i_{\scrv_k}}\subseteq W$ for some $W\in\scrw$ and thus $\im( f_{\scrv_k,i_{\scrv_k}})\subseteq U_{\scrv_k,i}\subseteq \St(W_0,\scrw)\subseteq Y$. However, for every $\scrv\in O_F(X) $, $f_{\scrv,i_{\scrv}}$ is not null-homotopic in $X$. Thus, since $Y$ represents an arbitrary neighborhood of $y$, $X$ is not semilocally $\pi_n$-trivial at $y$. By assumption, we must have $x=y$. Since $\{x_j\}\to x$, the same argument, but where $Y$ is replaced by $V'$, shows that there exists sufficiently refined $\scrv$ for which $\im(f_{\scrv,i_{\scrv}})\subseteq V'$; a contradiction. Since the claim is proved, there exists $\scru_1\succeq\scru_0$ in $O_F(X)$ such that whenever $\scrv\succeq\scru_1$, we have $\im(f_{\scrv,i})\subseteq V'$ for all $i\in\{1,2,\dots,m_{\scrv}\}$.

Fix $\scrv\succeq\scru_1$ in $O_F(X)$. For all $i\in\{1,2,\dots,m_{\scrv}\}$, we may find a path $\beta_{\scrv,i}:\ui\to V$ from $x$ to $f_{\scrv,i}(d_0)$. Since $X$ is simply connected, we have $[\alpha_{\scrv,i}\ast f_{\scru,i}]=[\beta_{\scrv,i}\ast f_{\scru,i}]$ for all $i$. Thus $g$ is represented by $\prod_{i=1}^{m_{\scrv}}\beta_{\scrv,i}\ast f_{\scrv,i}$, which has image in $V$.
\end{proof}

\begin{remark}[Topologies on homotopy groups]
Given a group $G$ and a collection of subgroups $\{N_j\mid j\in J\}$ of $G$ such that for all $j,j'\in J$, there exists $k\in J$ such that $N_k\subseteq N_j\cap N_{j'}$, we can generate a topology on $G$ by taking the set $\{gN_j\mid j\in J,g\in G\}$ of left cosets as a basis. We can apply this to both the collection of Spanier subgroups $\span(\scru,x_0)$ and the collection of kernels $\ker(p_{\scru \#})$ to define two natural topologies on $\pi_n(X,x_0)$.
\begin{enumerate}
\item The \textit{Spanier topology} on $\pi_n(X,x_0)$ is generated by the left cosets of Spanier groups $\pi_n(\scru,x_0)$ for $\scru\in O(X)$.
\item The \textit{shape topology} on $\pi_n(X,x_0)$ is generated by left cosets of the kernels $\ker(p_{\scru\#})$ where $(\scru,U_0)\in \Lambda$. Equivalently, the shape topology is the initial topology with respect to the map $\Psi_{n}$ where the groups $\pi_n(|N(\scru)|,U_0)$ are given the discrete topology and $\check{\pi}_n(X,x_0)$ is given the inverse limit topology.
\end{enumerate}
Lemma \ref{inclusionlemma1} ensures the Spanier topology is always finer than the shape topology. Lemma \ref{secondinclusionlemma} then implies that, whenever $X$ is paracompact, Hausdorff, and $LC^{n-1}$, the two topologies agree. Moreover, $\pi_n(X,x_0)$ is Hausdorff in the shape topology if and only if $X$ is $\pi_n$-shape injective.
\end{remark}

\section{When is $\Psi_n$ an isomorphism?}\label{section6isomorphism}

It is a result of Kozlowski-Segal \cite{KSvietoris} that if $X$ is paracompact Hausdorff and $LC^n$, then $\Psi_n:\pi_n(X,x)\to \check{\pi}_n(X,x)$ is an isomorphism. This result was first proved for compact metric spaces in \cite{Kuperburg}. The assumption that $X$ is $LC^n$ assumes that small maps $S^n\to X$ may be contracted by small null-homotopies. However, if $\bbe_n$ is the $n$-dimensional earring space, then the cone $C\bbe_{n}$ is $LC^{n-1}$ but not $LC^{n}$. However, $C\bbe_n$ is contractible and so $\Psi_n$ is an isomorphism of trivial groups. Certainly, many other examples in this range exist. Our Spanier group-based approach allows us to generalize Kozlowksi-Segal's theorem in a way that includes this example by removing the need for ``small" homotopies in dimension $n$. For simplicity, we will sometimes suppress the pointedness of open covers and simply write $\scru$ for elements of $\Lambda$.

\begin{lemma}\label{ontolemma}
Let $n\geq 1$. Suppose that $X$ is paracompact, Hausdorff, and $LC^{n-1}$. If $([f_{\scru}])_{\scru\in\Lambda}\in \check{\pi}_1(X,x_0)$, then for every $\scru\in\Lambda$, there exists $[g]\in \pi_n(X,x)$ such that $(p_{\scru})_{\#}([g])=[f_{\scru}]$.
\end{lemma}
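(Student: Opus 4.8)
The statement asserts surjectivity of $\Psi_n$ in a particularly strong "level-wise" form: given a thread $([f_{\scru}])_{\scru \in \Lambda}$ in the inverse limit $\check{\pi}_n(X,x_0)$, I need to produce a single class $[g] \in \pi_n(X,x_0)$ whose image under *one fixed* canonical projection $p_{\scru\#}$ is the prescribed $[f_{\scru}]$ — by coherence this will then automatically match all coarser coordinates. So the work is: build a map $g : \partial\Delta_{n+1} \to X$ realizing a representative of $[f_{\scru}]$ after composing with $p_{\scru}$, using the recursive extension machinery of Section 4. The strategy mirrors the proof of Lemma \ref{secondinclusionlemma}, but run "backwards": there we started with a map $X$-side that died in the nerve and extended it over a disk; here we start with a map on the nerve side (a loop/$n$-sphere in $|N(\scru)|$) and want to pull it back to an $X$-side map.

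**Key steps.** First, fix $\scru \in \Lambda$ and choose, via Lemmas \ref{paracompactlemma} and \ref{refinementlemma2}, a sequence of refinements
\[\scru = \scru_n \preceq_{\ast} \scrv_n \preceq_{\ast}^{n-1} \scru_{n-1} \preceq_{\ast} \cdots \preceq_{\ast}^{1} \scru_1 \preceq_{\ast} \scrv_1 \preceq_{\ast}^{0} \scru_0\]
and one more $\scru_0 \preceq_{\ast} \scrv = \scrv_0 \in \Lambda$, exactly as in Lemma \ref{secondinclusionlemma}. Next, use the thread to get a representative: pick a simplicial map $\phi : L \to |N(\scrv)|$, where $L = \sd^m\partial\Delta_{n+1}$ for suitable $m$, representing $[f_{\scrv}]$ (the coordinate of the thread at $\scrv$), based at the vertex $V_0$. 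I then want to construct $f_0 : M[0] \to X$ (with $M[n] = L \cup K_n$, $K = \sd^m\Delta_{n+1}$) "lifting" $\phi$ through $p_{\scrv}$ on vertices: for each vertex $a$ of $K$ with $\phi(a) = V_a$, choose $f_0(a) \in V_a$. Because $\phi$ is simplicial, each simplex $\sigma = [a_0,\dots,a_k]$ of $L$ has $\bigcap_i V_{a_i} \neq \emptyset$, so picking a point $x_\sigma$ there and invoking $\scru_0 \preceq_{\ast} \scrv$ gives, for each $(n+1)$-simplex $\sigma$, a $U_\sigma \in \scru$ with $\{f_0(a_i)\} \subseteq \St(x_\sigma,\scrv) \subseteq U_\sigma$. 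Then Lemma \ref{appliedrecursionlemma} produces an extension $f_n : M[n] \to X$ with $f_n(\partial\sigma) \subseteq \mathbf{U}_\sigma \in \scru$ for every $(n+1)$-simplex $\sigma$; set $g = f_n|_{\partial\Delta_{n+1}} : \partial\Delta_{n+1} \to X$ and note $g$ is based at $x_0$ (choosing $f_0({\bf o}) = x_0$). Finally, I must check $(p_{\scru})_\#([g]) = [f_\scru]$: compose $g$ with $p_{\scru}$, compare with $p_{\scru\scrv} \circ \phi$ (the projected simplicial representative of the $\scrv$-coordinate, which by coherence of the thread represents $[f_\scru]$), and show these agree up to homotopy in $|N(\scru)|$ — because both $p_{\scru} \circ g$ and $p_{\scru\scrv}\circ\phi$ send each simplex of $K$ into a common star $\st(U,N(\scru))$, a standard "contiguity / same carrier" argument gives the homotopy.

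**Main obstacle.** The delicate point is the last step: verifying that the $X$-side map $g$, after projecting by $p_{\scru}$, actually recovers the prescribed nerve-side class $[f_\scru]$ rather than some unrelated loop. This requires carefully tracking carriers: one needs that for each simplex $\tau$ of $K$, the set $p_{\scru}(g(\tau))$ and the simplex $p_{\scru\scrv}(\phi(\tau))$ lie in a common small star of $N(\scru)$, so that the straight-line homotopy in $|N(\scru)|$ between the two maps stays within the nerve and is based. Making this precise means showing the extensions produced by Lemma \ref{recursionlemma} at each stage keep $g$ close (in the star-refinement sense) to the data controlling $\phi$ — essentially that the chain of refinements was chosen fine enough that $p_{\scru} \circ g$ and $p_{\scru\scrv}\circ\phi$ are contiguous simplicial-approximation-wise. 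A secondary technical nuisance is the bookkeeping for the basepoint: one must ensure all choices ($f_0$ at ${\bf o}$, the paths implicit in path-conjugation, the based homeomorphism $\partial\Delta_{n+1}\cong S^{n-1}$ convention) are compatible so that $[g]$ is a genuine element of $\pi_n(X,x_0)$ and the equality holds on the nose in the based sense. I expect the carrier-tracking in the comparison step to be where most of the real content lies, with everything else being a direct replay of Section 5's techniques.
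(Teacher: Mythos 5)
Your overall strategy matches the paper's in outline (use the coordinate $[f_{\scrv}]$ of the thread at a fine refinement $\scrv$, simplicially approximate it, choose point-images of vertices inside the corresponding cover elements, and run the recursive extension machinery of Section \ref{section4extensions}), but there are two genuine problems. First, your choice of $(K,L)$ is wrong: you set $K=\sd^m\Delta_{n+1}$ and $L=\sd^m\partial\Delta_{n+1}$, which is the configuration of Lemma \ref{secondinclusionlemma}, where a map $L\to X$ is already given and is extended over the interior skeleta. Here nothing is given on $L$ except the basepoint: the simplicial map $\phi$ representing $[f_{\scrv}]$ lives in the nerve, not in $X$, so your $f_0:M[0]\to X$ with $M[0]=L\cup K_0$ would have to be defined on all of the $n$-dimensional complex $L$ at the start of the recursion --- exactly the map you are trying to build. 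Moreover $\phi$ assigns no vertex $V_a$ to the interior vertices of $\sd^m\Delta_{n+1}$, and extending $\phi$ over the disk is impossible unless $[f_{\scrv}]$ is trivial. The correct instantiation is $K=\sd^{m}\partial\Delta_{n+1}$ (an $n$-dimensional complex) with $L=\{{\bf o}\}$, so that $M[0]=K_0$ is just the vertex set and the recursion builds $g$ on the sphere itself, skeleton by skeleton, terminating at $M[n]=K$; Lemma \ref{recursionlemma} is applied with its dimension parameter shifted down by one.

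Second, the comparison step that you correctly flag as ``where most of the real content lies'' is genuinely missing, and it cannot be carried out with the refinement chain you chose. Starting the chain at $\scru=\scru_n$ only yields $g(\sigma)\subseteq U_{\sigma}$ for elements $U_{\sigma}\in\scru$; but a canonical map satisfies $p_{\scru}^{-1}(\st(U,N(\scru)))\subseteq U$, not $p_{\scru}(U)\subseteq \st(U,N(\scru))$, so this gives no control on where $p_{\scru}\circ g$ sends simplices and the contiguity argument you sketch has no way to get started. The paper's fix is to insert the pulled-back cover $\scru'=\{p_{\scru}^{-1}(\st(U,|N(\scru)|))\mid U\in\scru\}$ and a star refinement $\scrw\succeq_{\ast\ast}\scru'$ at the top of the chain, so that the recursive extension lands in elements of $\scrw$ rather than merely of $\scru$, together with compatible vertex-level choices of the projections $p_{\scru'\scrw}$, $p_{\scrw\scrv}$, and $p_{\scru\scru'}$; one then verifies the star condition $p_{\scru}\circ g(\st(a,K))\subseteq\st(F(a),|N(\scru)|)$ directly, exhibiting $F=p_{\scru\scru'}\circ p_{\scru'\scrw}\circ p_{\scrw\scrv}\circ f'$ as a simplicial approximation of $p_{\scru}\circ g$. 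Without this extra layer of refinement through the preimages of stars, the final identification $p_{\scru\#}([g])=[f_{\scru}]$ does not follow.
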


\begin{proof}
With $(\scru,U_0)\in \Lambda$ and $p_{\scru}$ fixed, consider a representing map $f_{\scru}:(|\partial \Delta_{n+1}|,{\bf o})\to (|N(\scru)|,U_0)$. Let $\scru '=\{p_{\scru}^{-1}(\st(U,|N(\scru)|))\mid U\in\scru\}$. Since $p_{\scru}^{-1}(\st(U,|N(\scru)|))\subseteq U$ for all $U\in\scru$, we have $\scru\preceq \scru '$. Applying Lemmas \ref{paracompactlemma} and \ref{refinementlemma2} we can choose the following sequence of refinements of $\scru '$. First, we choose a star refinement $\scru '\preceq_{\ast\ast}\scrw$ so that for every $W\in\scrw$, there exists $U'\in\scru '$ such that $\St(W,\scrw)\subseteq U'$. In this case, we can choose the projection map $p_{\scru '\scrw}:|N(\scrw)|\to |N(\scru ')|$ so that if $p_{\scru '\scrw}(W)= U'$ on vertices, then $\St(W,\scrw)\subseteq U'$ in $X$. This choice will be important near the end of the proof.

To construct $g$, we must take further refinements. First, we choose a sequence of a refinements \[\scrw=\scrw_{n}\preceq_{\ast}\scrv_{n}\preceq_{\ast}^{n-1}\scrw_{n-1}\preceq_{\ast}\cdots \preceq_{\ast}^{2}\scrw_{2}\preceq _{\ast} \scrv_{2}\preceq_{\ast}^{1} \scrw_{1}\preceq_{\ast} \scrv_{1}\preceq_{\ast}^{0} \scrw_{0}\]
followed by one last refinement $\scrw_{0}\preceq_{\ast} \scrv_0=\scrv$. Let $V_0\in\scrv$ be any set containing $x_0$ and recall that since $X$ is paracompact Hausdorff $(\scrv,V_0)\in \Lambda$. For some choice of canonical map $p_{\scrv}$, we have $p_{\scrv}^{-1}(\st(V,N(\scrv)))\subseteq V$ for all $V\in\scrv$. 

Recall that we have assumed the existence of a map $f_{\scrv}:(\partial \Delta_{n+1},{\bf o})\to (|N(\scrv)|,V_0)$ such that $p_{\scru\scrv\#}([f_{\scrv}])=[f_{\scru}]$. Set $Y_V=f_{\scrv}^{-1}(\st(V,N(\scrv)))$ so that $\scry=\{Y_V\mid V\in\scrv\}$ is an open cover of $\partial\Delta_{n+1}$. As before, we find a simplicial approximation for $f_{\scrv}$. Find $m\in\bbn$ such that the star $\st(a,\sd^{m}\partial\Delta_{n+1})$ of each vertex $a$ in $\sd^{m}\partial\Delta_{n+1}$ lies in a set $Y_{V_a}\in\scry$ for some $V_a\in \scrv$. Since $f_{\scrv}({\bf o})=V_0$, we may take $V_{{\bf o}}=V_0$. The assignment $a\mapsto V_a$ on vertices extends to a simplicial approximation $f':\sd^{m}\partial\Delta_{n+1}\to |N(\scrv)|$ of $f_{\scrv}$, i.e. a simplicial map $f'$ such that \[f_{\scrv}(\st(a,\sd^{m}\partial\Delta_{n+1}))\subseteq \st(f'(a),|N(\scrv)|)=\st(V_a,|N(\scrv)|)\] for each vertex $a$. 

We begin to define $g$ with the constant map $\{{\bf o}\}\to X$ sending ${\bf o}$ to $x_0$. In preparation for applications of Lemma \ref{recursionlemma}, set $K=\sd^m\partial \Delta_{n+1}$ and $L=\{{\bf o}\}$ so that $K[k]=K_k$. First, we define a map $g_0:M[0]\to X$ by picking, for each vertex $a\in K_0$, a point $g_0(a)\in V_a$. In particular, set $g_0({\bf o})=x_0$. This choice is well defined since we have
$p_{\scrv}(x_0)=  V_0\in \st(V_{{\bf o}},N(\scrv))$ and thus $g_0({\bf o})=x_0\in p_{\scrv}^{-1}(\st(V_{{\bf o}},N(\scrv)))\subseteq V_{{\bf o}}$. Note that $f'$ maps every simplex $\sigma=[a_0,a_1,\dots,a_k]$ of $K$ to the simplex of $|N(\scrv)|$ spanned by $\{V_{a_i}\mid 0\leq i\leq k\}$. By definition of the nerve, we have $\bigcap\{V_{a_i}\mid 0\leq i\leq k\}\neq \emptyset$. Pick a point $x_{\sigma}\in \bigcap\{V_{a_i}\mid 0\leq i\leq k\}$. By our initial choice of refinements, we have $\scru_0\preceq_{\ast} \scrv$. If $\sigma=[a_0,a_1,\dots,a_{n}]$ is a $n$-simplex of $K$, then $\St(x_{\sigma},\scrv)\subseteq U_{0,\sigma}$ for some $U_{0,\sigma}\in\scru_0$. In particular $\{g_0(a_i)\mid 0\leq i\leq n+1\}\subseteq \bigcup\{V_{a_i}\mid 0\leq i\leq n\}\subseteq U_{0,\sigma}$. Thus $g_0$ maps the $0$-skeleton of $\sigma$ into $U_{0,\sigma}$. If ${\bf o}\in \sigma$, then $g_0({\bf o})\in p_{\scrv}^{-1}(\st(V_{{\bf o}},N(\scrv)))\subseteq V_{{\bf o}}\subseteq U_{0,\sigma}$. Hence, for every $n$-simplex $\sigma$ of $K$, we have $g_0(\sigma \cap M[0])\subseteq U_{0,\sigma}$.

We are now in the situation to recursively apply Lemma \ref{recursionlemma}. This is similar to the application in the proof of Lemma \ref{secondinclusionlemma} with the dimension $n+1$ shifted down by one so we omit the details. Recalling that $M[n]=\sd^m\partial \Delta_{n+1}$, we obtain an extension $g:K=M[n]\to X$ of $g_0$ such that for every $n$-simplex $\sigma$ of $K$, we have $g(\sigma)\subseteq W_{\sigma}$ for some $W_{\sigma}\in \scrw=\scru_n$.

With $g$ defined, we seek show that $f_{\scru}\simeq p_{\scru}\circ g$. Since $f'\simeq f_{\scrv}$ (by simplicial approximation), $p_{\scru\scrv}\simeq p_{\scru \scru '}\circ p_{\scru '\scrw}\circ p_{\scrw\scrv}$ (for any choice of projection maps), and $p_{\scru\scrv}\circ f_{\scrv}\simeq f_{\scru}$ (for any choice of projection $p_{\scru\scrv}$), it suffices to show that $p_{\scru\scru '}\circ p_{\scru '\scrw}\circ p_{\scrw\scrv}\circ f'\simeq p_{\scru}\circ g$. We do this by proving that the simplicial map $F=p_{\scru\scru '}\circ p_{\scru '\scrw}\circ p_{\scrw\scrv}\circ f':K\to |N(\scru)|$ is a simplicial approximation for $p_{\scru}\circ g$. Recall that this can be done by verifying the ``star-condition" $p_{\scru}\circ g(\st(a,K))\subseteq \st(F(a),|N(\scru)|)$ for any vertex $a\in K$ \cite[Ch.2 \S 14]{Mu84}. Since $n\geq 1$, we have $\scrw\preceq_{\ast\ast}\scrv$. Hence, just like our choice of $p_{\scru '\scrw}$, we may choose $p_{\scrw\scrv}$ so that whenever $p_{\scrw\scrv}(V)=W$, then $\St(V,\scrv)\subseteq W$. Also, we choose $p_{\scru\scru '}$ to map $p_{\scru}^{-1}(\st(U,|N(\scru)|))\mapsto U$ on vertices.

Fix a vertex $a_0\in K$. To check the star-condition, we'll check that $p_{\scru}\circ g(\sigma)\subseteq \st(F(a_0),|N(\scru)|)$ for each $n$-simplex $\sigma$ having $a_0$ as a vertex. Pick an $n$-simplex $\sigma=[a_0,a_1,\dots,a_n]\subseteq K$ having $a_0$ as a vertex. Recall that $f'(a_i)=V_{a_i}$ for each $i$. Set $p_{\scrw\scrv}(V_{a_i})=W_{i}$ and $p_{\scru '\scrw}(W_{i})=p_{\scru}^{-1}(\st(U_i,|N(\scru)|))\in \scru '$ for some $U_i\in\scru$. Then $F(a_i)=U_i$ for all $i$. It now suffices to check that $p_{\scru}\circ g(\sigma)\subseteq  \st(U_{0},|N(\scru)|)$. Recall that by our choice of $p_{\scru '\scrw}$, we have $\St(W_{0},\scrw)\subseteq p_{\scru}^{-1}(\st(U_0,|N(\scru)|))$. Thus it is enough to check that $g(\sigma)\subseteq \St(W_{0},\scrw)$. By construction of $g$, we have $g(\sigma)\subseteq W_{\sigma}$ for some $W_{\sigma}\in\scrw$. Since $g(a_0)\in W_0\cap W_{\sigma}$, we have $g(\sigma)\subseteq W_{\sigma}\subseteq \St(W_0,\scrw)$, completing the proof.
\end{proof}

Finally, we prove our second result, Theorem \ref{mainresult2}.

\begin{proof}[Proof of Theorem \ref{mainresult2}]
Since $X$ is paracompact, Hausdorff, $LC^{n-1}$, we have $\span(X,x_0)=\ker(\Psi_n)$ by Theorem \ref{maintheorem}. Since $X$ is semilocally $\pi_n$-trivial, we have $\span(\scru,x_0)=1$ for some $\scru\in\Lambda$. It follows that $\Psi_n$ is injective. Moreover, by Lemma \ref{secondinclusionlemma}, we may find $\scrv\in\Lambda$ with $\ker(p_{\scrv\#})\subseteq \span(\scru,x_0)$. Thus $p_{\scrv\#}:\pi_n(X,x_0)\to \pi_n(|N(\scrv)|,V_0)$ is injective. Let $([f_{\scru}])_{\scru\in\Lambda}\in \check{\pi}_n(X,x_0)$. By Lemma \ref{ontolemma}, for each $\scru\in\Lambda$, there exists $[g_{\scru}]\in \pi_n(X,x_0)$ such that $p_{\scru}([g_{\scru}])=[f_{\scru}]$. If $\scrv\preceq\scrw$, then we have \[p_{\scrv\#}([g_{\scrv}])=[f_{\scrv}]=p_{\scrv\scrw\#}([f_{\scrw}])=p_{\scrv\scrw\#}\circ p_{\scrw\#}([g_{\scrw}])=p_{\scrv\#}([g_{\scrw}]).\]
Since $p_{\scrv\#}$ is injective, it follows that $[g_{\scrw}]=[g_{\scrv}]$ whenever $\scrv\preceq\scrw$. Setting $[g]=[g_{\scrv}]$ gives $\Psi_n([g])=([f_{\scru}])_{\scru\in\Lambda}$. Hence, $\Psi_n$ is surjective.
\end{proof}

\section{Examples}

\begin{example}\label{examplesuspensions}
Fix $n\geq 2$. When $X$ is a metrizable $LC^{n-1}$ space, the cone $CX$ and unreduced suspension $SX$ are $LC^{n-1}$ and semilocally $\pi_n$-trivial but need not be $LC^{n}$. This occurs in the case $X=\bbe_n$ or if $X=Y\vee \bbe_n$ where $Y$ is a CW-complex. In such cases, $\Psi_n:\pi_n(SX)\to\check{\pi}_n(SX)$ is an isomorphism. One point unions of such cones and suspensions, e.g. $CX\vee CY$ or $CX\vee SY$ also meet the hypotheses of Theorem \ref{mainresult2} (checking this is fairly technical \cite{BrazSequentialnconn}) but need not be $LC^n$.
\end{example}

\begin{example}
The converse of Theorem \ref{mainresult2} does not hold. For $n\geq 2$, $\bbe_n$ is $LC^{n-1}$ but is not semilocally $\pi_n$-trivial at the wedgepoint $x_0$. However, $\Psi_{n}:\pi_n(\bbe_n,x_0)\to \check{\pi}_n(\bbe_n,x_0)$ is an isomorphism where both groups are canonically isomorphic to $\bbz^{\bbn}$ \cite{EK00higher}. Additionally, for the infinite direct product $\prod_{\bbn}S^n$, $\Psi_{k}:\pi_k(\prod_{\bbn}S^n,x_0)\to \check{\pi}_k(\prod_{\bbn}S^n,x_0)$ is an isomorphism for all $k\geq 1$ even though $\prod_{\bbn}S^n$ is not $LC^{k-1}$ when $k-1\geq n$.
\end{example}

\begin{example}\label{examplemappingtoruscone}
We can also modify the mapping torus $M_f$ from Example \ref{mappingtorusexample} so that $\Psi_n$ becomes an isomorphism (recall that $n\geq 2$ is fixed). Let $X=M_f\cup C\bbe_n$ be the mapping cone of the inclusion $\bbe_n\to M_f$. For the same reason $M_f$ is $LC^{n-1}$, the space $X$ is $LC^{n-1}$. Moreover, if $U$ is a neighborhood of $\alpha(t)$ that deformation retracts onto a homeomorphic copy of $\bbe_n$, then any map $S^n\to U$ may be freely homotoped ``around" the torus and into the cone. It follows that $X$ is semilocally $\pi_n$-trivial. We conclude from Theorem \ref{mainresult2} that $\Psi_n:\pi_n(X)\to \check{\pi}_n(X)$ is an isomorphism. Since sufficiently fine covers of $X$ always give nerves homotopy equivalent to $S^1\vee S^{n+1}$, we have $\check{\pi}_n(X)=0$. Thus $\pi_n(X)=0$.
\end{example}

\begin{example}\label{examplenotuvnminusone}
Let $n\geq 2$ and $X=\bbe_1\vee S^n$ (see Figure \ref{fig3}). Note that because $\bbe_1$ is aspherical \cite{CCZaspherical,CFhigher}, $X$ is semilocally $\pi_n$-trivial. However, $X$ is not $LC^{1}$ because it has $\bbe_1$ as a retract. It is shown in \cite{BrazSequentialnconn} that $\pi_n(X)\cong \bigoplus_{\pi_1(\bbe_1)}\pi_n(S^n)\cong \bigoplus_{\pi_1(\bbe_1)}\bbz$ and that $\Psi_{n}:\pi_n(X)\to \check{\pi}_n(X)$ is injective. In particular, we may represent elements of $\pi_n(X)$ as finite-support sums $\sum_{\beta\in\pi_1(\bbe_1)}m_{\beta}$ where $m_{\beta}\in\bbz$. We show that $\Psi_{n}$ is not surjective.

Identify $\pi_1(X)$ with $\pi_1(\bbe_1)$ and recall from \cite{Edafreesigmaproducts} that we can represent the elements of $\pi_1(\bbe_1)$ as countably infinite reduced words indexed by a countable linear order (with a countable alphabet $\beta_1,\beta_2,\beta_3,\dots$). Here $\beta_j$ is represented by a loop $S^1\to \bbe_1$ going once around the $j$-th circle. Let $X_j$ be the union of $S^n$ and the largest $j$ circles of $\bbe_1$ so that $X=\varprojlim_{j}X_j$. Identify $\pi_1(X_j)$ with the free group $F_j$ on generators $\beta_1,\beta_2,\dots \beta_j$ and note that $\pi_n(X_j)\cong \bigoplus_{F_j}\bbz$. Thus we may view an element of $\pi_n(X_j)$ as a finite-support sums $\sum_{w\in F_j}m_{w}$ of integers indexed over reduced words in $F_j$. Let $d_{j+1,j}:F_{j+1}\to F_j$ be the homomorphism that deletes the letter $\beta_{j+1}$. Consider the inverse limit $\check{\pi}_1(X)=\varprojlim_{j}(F_j,d_{j+1,j})$. The map $X\to X_j$ that collapses all but the first $j$-circles of $\bbe_1$ induces a homomorphism $d_j:\pi_1(X)\to F_j$. There is a canonical homomorphism $\phi:\pi_1(X)\to \check{\pi}_1(X)=\varprojlim_{j}(F_j,d_{j+1,j})$ given by $\phi(\beta)=(d_1(\beta),d_2(\beta),\dots)$, which is known to be injective \cite{MM} but not surjective. For example, if $x_k=\prod_{j=1}^{k}[\beta_1,\beta_j]$, then $(x_1,x_2,x_3,x_4,\dots)$ is an element of $\check{\pi}_1(X)$ not in the image of $\phi$.

The bonding map $b_{j+1,j}:\pi_n(X_{j+1})\to \pi_n(X_j)$ sends a sum $\sum_{w\in F_{j+1}}m_{w}$ to $\sum_{v\in F_{j}}p_v$ where $p_v=\sum_{d_{j+1,j}(w)=v}m_{w}$. Similarly, projection map $b_j:\pi_n(X)\to \pi_n(X_j)$ sends the sum $\sum_{\beta\in\pi_1(X)}n_{\beta}$ to $\sum_{v\in F_{j}}m_v$ where $m_v=\sum_{d_j(\beta)=v}m_{\beta}$. Let $y_j\in \pi_n(X)$ be the sum whose only non-zero coefficient is the $x_j$-coefficient, which is $1$. Since $d_{j+1,j}(x_{j+1})=x_j$, it's clear that $(y_1,y_2,y_3,\dots)\in \check{\pi}_n(X)$. Suppose $\Psi_n(\sum_{\beta}m_{\beta})=(y_1,y_2,y_3,\dots)$. Writing $\sum_{\beta}m_{\beta}$ as a finite sum $\sum_{i=1}^{r}m_{\beta_i}$ for non-zero $m_{\beta_i}$, we must have $\sum_{d_j(\beta_i)=x_j}m_{\beta_i}=1$ for all $j\in\bbn$. Since there are only finitely many $\beta_i$ involved, there must exist at least one $i$ for which $d_j(\beta_i)=x_j$ for infinitely many $j$. For such $i$, we have $\phi(\beta_i)=(x_1,x_2,x_3,\dots)$, which, as mentioned above, is impossible. Hence $\Psi_{n}$ is not surjective.
\end{example}

\begin{figure}[H]
\centering \includegraphics[height=2in]{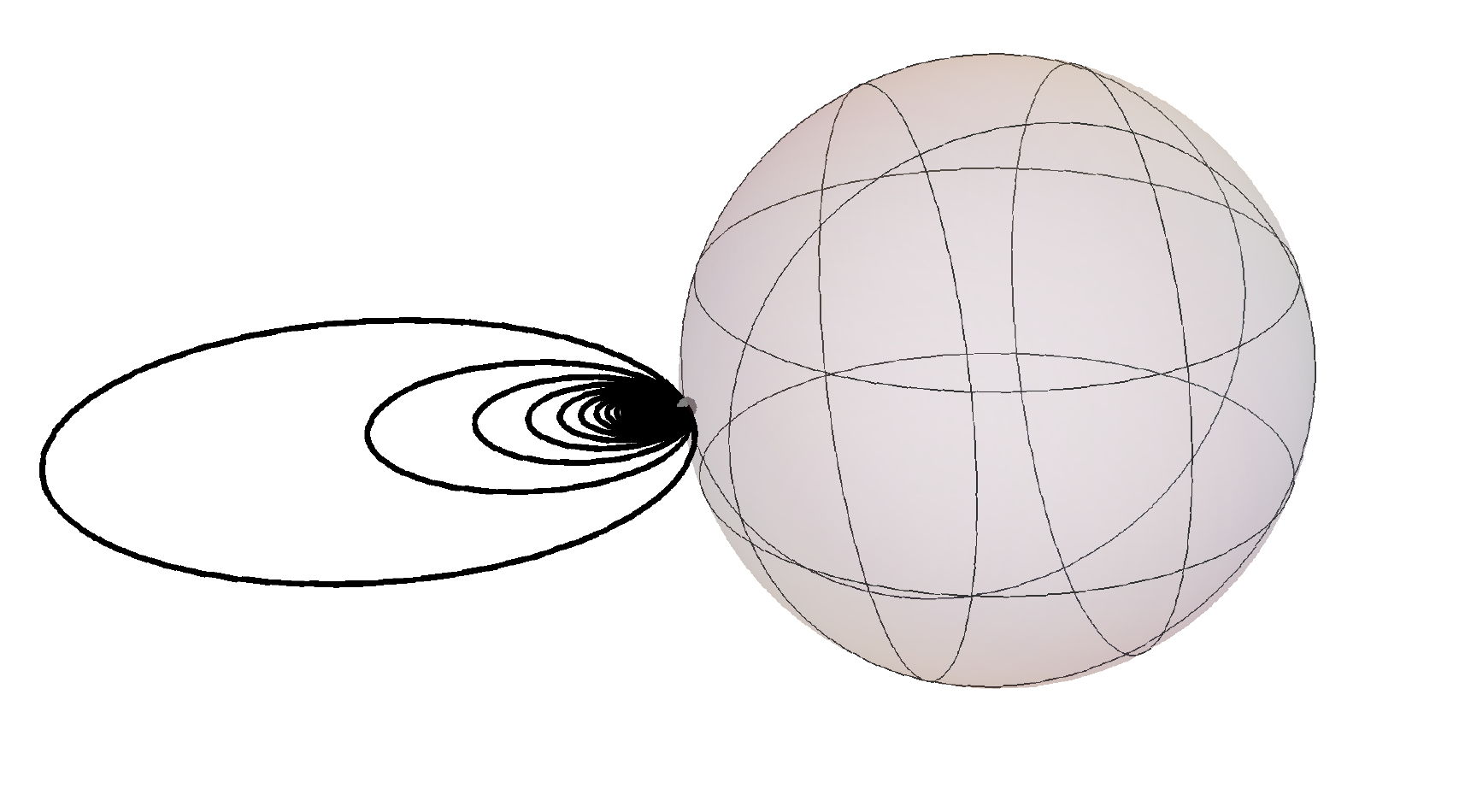}
\caption{\label{fig3}The one point union $\bbe_1\vee S^2$. }
\end{figure}

The previous example shows why we cannot do away with the $LC^{n-1}$ hypothesis in Theorem \ref{mainresult2}. Since we weakened the hypothesis from \cite{KSvietoris} in dimension $n$ and no hypothesis in dimension $n$ is required for Theorem \ref{maintheorem}, one might suspect that we might be able to do away with the dimension $n$ hypothesis completely. The next example, which is a higher analogue of the harmonic archipelago \cite{BS,CHM,KarimovRepovs} shows why this is not possible.

\begin{example}\label{examplenotsemilocallypintrivial}
Let $n\geq 2$ and $\ell_j:S^n\to \bbe_n$ be the inclusion of the $j$-th $n$-sphere in $\bbe_n$. Let $X$ be the space obtained by attaching $(n+1)$-cells to $\bbe_n$ using the attaching maps $\ell_j$. Since $\bbe_n$ is $LC^{n-1}$, it follows that $X$ is $LC^{n-1}$. However, $X$ is not semilocally $\pi_n$-trivial at the wedgepoint $\bf{o}$ of $\bbe_n$. Indeed, the infinite concatenation maps $\prod_{j\geq k}\ell_j=\ell_{k}\cdot\ell_{k+1}\cdots$ are not null-homotopic (using a standard argument that works for the harmonic archipelago) but are all homotopic to each other. Thus, $\pi_n(X,{\bf o})\neq 0$. However, for sufficiently fine open covers $\scru\in O(X)$, $|N(\scru)|$ is homotopy equivalent to a wedge of $(n+1)$-spheres and thus $\check{\pi}_n(X,{\bf o})=0$. Therefore, despite $X$ being $LC^{n-1}$, $\Psi_{n}$ is not an isomorphism. In fact, $\pi_n(X,{\bf o})=\span(X,{\bf o})=\ker(\Psi_n)$. The reader might also note that since $\bbe_{n-1}$ is $(n-1)$-connected and $\pi_n(\bbe_n,{\bf o})\cong H_n(\bbe_n)\cong \bbz^{\bbn}$, $X$ will also be $(n-1)$-connected. A Meyer-Vietoris Sequence argument similar to that in \cite{KarimovRepovs} can then be used to show $\pi_n(X,{\bf o})\cong H_n(X)\cong \bbz^{\bbn}/\oplus_{\bbn}\bbz$.
\end{example}

\end{document}